\newtheorem{theorem}{Theorem}[section]
\newtheorem{corollary}[theorem]{Corollary}
\newtheorem{lemma}[theorem]{Lemma}
\newtheorem{prop}[theorem]{Proposition}
\newtheorem{definition}[theorem]{Definition}
\newtheorem{remark}[theorem]{Remark}
\newtheorem*{theorem*}{Theorem}
\newcommand{\op}{\overline{\Bbb P}_n}
\newcommand{\up}{\underline{\Bbb P}_n}
\newcommand{\St}{{\rm {St}}}
\newcommand{\lk}{{\rm {Lk}}}
\newcommand{\Z}{{\Bbb Z}}
\newcommand{\E}{{\Bbb E}}
\newcommand{\U}{{\mathcal U}}
\newcommand{\R}{\Bbb R}
\newcommand{\N}{{\mathcal N}}
\newcommand{\cc}{{\frak c}}
\begin{document}
\title{Random Simplicial Complexes in the Medial Regime}
\author{Michael Farber} 
\address{School of Mathematical Sciences \\
Queen Mary, University of London\\
London, E1 4NS\\
United Kingdom}
\email{m.farber@qmul.ac.uk}
\thanks{Michael Farber was partially supported by a grant from the Leverhulme Foundation.}
\author{Lewis Mead}
\address{School of Mathematical Sciences \\
Queen Mary, University of London\\
London, E1 4NS\\
United Kingdom}
\email{lewis.mead@qmul.ac.uk}
\maketitle
\begin{abstract}
We describe topology of random simplicial complexes in the lower and upper models in the medial regime, i.e. under the assumption that the probability parameters $p_\sigma$ approach neither $0$ nor $1$. 
We show that nontrivial Betti numbers of typical lower and upper random simplicial complexes in the medial regime lie in a narrow range of dimensions. For instance, an upper random simplicial complex $Y$ on $n$ vertices in the medial regime with high probability has non-vanishing Betti numbers $b_{j}(Y)$ only for 
$k+c <n-j<k+\log_2 k +c'$ where $k=\log_2 \ln n$ and $c, c' $ are constants. 
A lower random simplicial complex on $n$ vertices in the medial regime is with high probability 
$(k+a)$-connected and its dimension $d$ satisfies $d\sim k+\log_2 k+ a'$ where $a, \, a'$ are constants. 
The paper develops a new technique, based on Alexander duality, which relates the lower and upper models. 

%
\end{abstract}
\section{Introduction}
Several models of high-dimensional random simplicial complexes have been intensely studied in recent years by many authors. 
This study is chiefly motivated by the need to model large complex systems in various scientific and industrial applications.
It is currently well understood that random simplicial complexes provide a more flexible mathematical modelling tool compared to random graphs, which are widely used. Methods of random topology may also be useful
in pure mathematics where they enable construction of objects with rare combination topological properties.

Historically the first models of random simplicial complexes were suggested by Linial and Meshulam \cite{linial} and Meshulam and Wallach \cite{meshulam}.  More recently Costa and Farber \cite{costa, costa1,costa2,costa3} studied a multi-parameter generalisation of the Linial-Meshulam-Wallach models involving a sequence of probability parameters $p_0, p_1, p_2, \dots$, each of the parameters controlling the density of simplexes of the corresponding dimension. The multi-parameter random simplical complex includes also the random clique complex \cite{kahle} as a special case. 

A multi-parameter random simplicial complex $Y$ can be constructed as follows. 
One starts with an $n$-dimensional simplex $\Delta_n$ where $n\to \infty$. Consider the set of all $i$-dimensional faces of $\Delta_n$ and choose its random subset selecting each $i$-dimensional 
sub-simplex of $\Delta_n$ with probability $p_i$ independently of each other. Making such selection for every dimension $i\le n$ we obtain a random hypergraph $X$ and the multiparameter random simplicial complex $Y$ is defined as the largest simplicial complex contained in $X$. In other words, a simplex $\sigma\subset \Delta_n$ belongs to $Y$ iff its every face $\tau\subseteq \sigma$ belongs to $X$. 

The process described in the previous paragraph is homogeneous version of {\it \lq\lq the lower model\rq\rq} of random simplicial complexes, in terminology of the recent paper \cite{farber}. There is also {\it \lq\lq an upper model \rq\rq}, see \cite{farber}, where the random simplicial complex $Y'$ is obtained from the random hypergraph $X$ as above by taking the smallest simplicial complex $Y'\subset \Delta_n$ containing $X$ (contrary to the largest simplicial subcomplex $Y$ contained in $X$). 
A simplex $\sigma\subset \Delta_n$ belongs to $Y'$ iff 
there is a larger simplex $\tau\supseteq \sigma$ which belongs to $X$. 
It was shown in \cite{farber} that the lower and upper models are Alexander dual to each other and thus knowing the Betti number in one of the models immediately gives an answer for the other model. 

In this paper we do not aim to survey all current activity and progress in the research field of topology of random simplicial complexes. It is a vibrant and rapidly developing area with many publications; we refer the reader to a recent survey \cite{kahle1}. While the majority of publications  study various specifications of the lower model, very recently results concerning special cases of the upper model have started to emerge, see \cite{cooley}.

The notion of {\it a 
 critical dimension} characterises the global behaviour of the Betti numbers in the lower and upper models, under certain assumptions, see
 \cite{costa3} and \cite{farber} .
 In the lower model, the Betti numbers below the critical dimension vanish and Betti numbers above the critical dimension are significantly smaller 
than the Betti number in the critical dimension. The structure of the Betti numbers in the upper model is slightly more complicated (see \cite{farber} where the notion of {\it a spread} is introduced); it can be vaguely characterised by saying that homologically a lower and upper random simplicial complexes are well approximated by wedges of spheres of the critical dimension. Note that unlike the present paper, the main assumption of \cite{costa3}, \cite{farber} was that the probability parameters $p_i$ have the form $p_i=n^{-\alpha_i}$ and in particular they tend to $0$ as $n\to \infty$. 
 
In the present paper we study the opposite situation: we assume that the probability parameters satisfy 
\begin{equation}\label{eqnmedial}
p\leq p_\sigma \leq P
\end{equation}
for all simplexes $\sigma$ 
 where the numbers
 $p, \, \, P\in(0,1)$ are independent of $n$. 
 We call this \textit{the medial regime}. In the medial regime the probability parameters $p_\sigma$ can approach neither $0$ nor $1$.
 Note that the medial regime includes as a special case the simplest and most natural situation when all probability parameters $0<p_\sigma<1$ are equal to each other and are independent of $n$. 
 
 We show that {\it a lower model} random simplicial complex $Y$ in the medial regime has dimension 
 $$\dim Y\, \sim\,  \log_2\ln n+\log_2\log_2\ln n,$$ it
 is simply connected and, may have nontrivial Betti numbers 
 $b_j(Y)$ only for 
 \begin{eqnarray}\label{between}
 j\in \left[\log_2\ln n +c, \, \, \log_2\ln n+\log_2\log_2\ln n+c'\right],
 \end{eqnarray}
 where $c, c'$ are constants. 
A more precise statement is given below as Theorem \ref{thm1}. The proof uses the Garland method relating the spectral gap of links with vanishing of the Betti numbers. 

We also describe topology of a typical random simplicial complex $Y$ with respect to {\it the upper model} in the medial regime. We show that it has a rather different behaviour: (a) its dimension equals $n-1$, (b) it contains the skeleton $\Delta_n^{(n-d)}$ where 
$$d \sim \log_2\ln n+\log_2\log_2\ln n,$$
(c) the Betti numbers $b_{n-j}(Y)$ vanish except for a range of dimensions of width approximately $\log_2\log_2\ln n$. 
A precise statement is given below as Theorem \ref{thm2}. 

The proofs of the main results of this paper concerning the lower model use Garland's method which has been used recently 
by other authors working in stochastic topology. Additionally, we employ a new tool, Alexander duality, which allows us to deduce the results concerning the upper model; as far as we know this duality technique is new within the area of probabilistic topology.

\section{Random simplicial complexes: the upper  and lower models.}\label{sec:twomodels}

Here we recall the construction of the lower and upper probability measures on the set of simplicial complexes following \cite{farber}. 
\subsection{} Let $[n]$ denote  the set $ \{0, 1,\dots,n\}$. 
{\it A hypergraph} $X=\{\sigma\}_{\sigma\subsetneq [n]}$ is a collection 
of non-empty proper subsets  $\sigma\subsetneq [n]$. We emphasise that for technical reasons we exclude the possibility for a hypergraph $X$ to contain the 
whole set $[n]$. 
The symbol
$\Omega_n$ stands for the set of all such hypergraphs. 
Let $p_\sigma\in [0,1]$ be a probability parameter associated with each non-empty proper subset $\sigma\subsetneq [n]$. 
Using these parameters 
one defines a probability function $\mathbb P_n$ on $\Omega_n$ by the formula
\begin{eqnarray}\label{prob0} \mathbb{P}_n(X) =\prod_{\sigma\in X}p_\sigma\cdot\prod_{\sigma\not\in X} q_\sigma, \quad \mbox{where}\quad q_\sigma=1-p_\sigma.\end{eqnarray}
Clearly, $\mathbb{P}_n$ is a Bernouilli measure on the set of all non-empty subsets of $[n]$.

\subsection{} Let $\Omega^\ast_n\subseteq \Omega_n$ denote the set of all {\it simplicial complexes} on the vertex set $[n]=\{0, 1, \dots, n\}$. Recall that a hypergraph $X$ is a simplicial complex if it is closed with respect to taking faces.

Let $\Delta_n$ denote the simplicial complex consisting of all non-empty subsets of $[n]$. The complex $\Delta_n$ is known as the $n$-dimensional simplex spanned by the set $[n]$. The set $\Omega_n^\ast$ is the set of all subcomplexes of $\Delta_n$. 
Non-empty subsets of $[n]$ will be also referred to as simplexes. 

There are two natural retractions 
\begin{eqnarray}\label{maps}\overline r, \, \, \underline r: \, \Omega_n\to \Omega^\ast_n\end{eqnarray}
which are defined as follows.
For a hypergraph $X\in \Omega_n$ we denote by 
$\overline r(X) =\overline X$ the smallest simplicial complex 
containing $X$; simplex $\sigma\in \Delta_n$ belongs to $\overline X$ iff for some $\tau\in X$ one has $\tau\supseteq \sigma$. 
On the other hand, the simplicial complex $\underline r(X) =\underline X$ is defined as the largest simplicial complex 
contained in $X$; a simplex $\sigma\subseteq [n]$ belongs to $\underline X$ iff every simplex $\tau\subseteq \sigma$ belongs to $X$. 


We shall denote by 
\begin{eqnarray}\label{measures}
\overline {\Bbb P}_n= \overline r_\ast({\Bbb P}_n)\quad \mbox{and}\quad \underline {\Bbb P}_n= \underline r_\ast({\Bbb P}_n)
\end{eqnarray}
the two probability measures on the space of simplicial complexes $\Omega^\ast_n$ obtained as the push-forwards (or image measures) of the measure (\ref{prob0})
with respect to the maps (\ref{maps}). Explicitly, for a simplicial complex $Y\subseteq \Delta_n$ one has
\begin{eqnarray}\label{explicitly}
\op(Y) =\sum_{X\in \Omega_n, \, \overline X=Y} \mathbb P_n(X) \quad \mbox{and}\quad \up(Y) =\sum_{X\in \Omega_n, \,\underline X=Y} \mathbb P_n(X).\end{eqnarray}
We call $\op$ and $\up$ {\it the upper and lower measures} correspondingly. 

There are explicit formulae for the lower and upper probability measures. For a simplicial complex $Y\subseteq \Delta_n$ one has (see \cite{farber}):
\begin{equation}\label{form1}
\up(Y) = \prod_{\sigma \in Y} p_\sigma \cdot \prod_{\sigma\in E(Y)} q_\sigma, \quad \mbox{and}\quad
\op(Y) = \prod_{\sigma \in M(Y)} p_\sigma \cdot \prod_{\sigma\not\in Y} q_\sigma.
\end{equation}
Here $E(Y)$ denotes the set of {\it external simplexes} of a simplicial subcomplex $Y\subseteq \Delta_n$, i.e. simplexes $\sigma\in \Delta_n$ such that $\sigma\not\in Y$ but the boundary $\partial \sigma$ is contained in $Y$. 
The symbol $M(Y)$ denotes the set of {\it maximal simplexes} of $Y$, i.e. those which are not faces of other simplexes of $Y$. 

Random simplicial complexes $Y\in \Omega_n^\ast$ with respect of the lower probability measures $\up$ admit the following intuitive description. The complex $Y$ is a union of its skeleta 
$$Y^0\subset Y^1\subset Y^2\subset \dots\subset Y^{n-1}=Y$$ where $Y^0$ is a random $0$-dimensional 
complex obtained from the set of vertices $[n]=\{0, 1, \dots, n\}$ by selecting each vertex $v\in [n]$ at random, with probability $p_v$, independently of the other vertices. For $i\ge 1$ the complex $Y^i$ is obtained from $Y^{i-1}$ by randomly selecting and adding external simplexes of dimension $i$, i.e. simplexes $\sigma$ with $\dim \sigma=i$ such that $\partial \sigma\subset Y^{i-1}$; each external simplex 
$\sigma$ is added at random, with probability $p_\sigma$, independently of other $i$-dimensional simplexes. 

Random simplicial complexes $Y\in \Omega_n^\ast$ with respect of the upper probability measures $\op$ can also be constructed inductively. The complex $Y$ admits a filtration by simplicial subcomplexes
$$Y_0\subset Y_1\subset Y_2\subset \dots\subset Y_{n-1} =Y$$ where $Y_0\subset \Delta_n$ is a pure $(n-1)$-dimensional random complex obtained by selecting each $(n-1)$-dimensional simplex $\sigma\subset [n]$ at random, with probability $p_\sigma$, independently of the other $(n-1)$-dimensional simplexes. $Y_0$ comprises of the union of the selected $(n-1)$-dimensional simplexes and all their faces.
Each following complex $Y_i$, $i\ge 1$,  is obtained from the previous one $Y_{i-1}$ by adding simplexes $\sigma$ of dimension $n-1-i$ which are not in $Y_{i-1}$; each such $\sigma$ is added at random, with probability $p_\sigma$, independently of the choices made with respect to the other $(n-1-i)$-dimensional simplexes. Once a simplex $\sigma$ of dimension $(n-1-i)$ is added to $Y_{i-1}$ all faces of $\sigma$ are automatically added so that $Y_i$ is a simplicial complex. 

\begin{definition} \label{homog} We shall say that a system of probability parameters $p_\sigma$ is homogeneous if $p_\sigma =p_{\sigma'}$ assuming that $\dim \sigma = \dim \sigma'$. 
\end{definition}

Note that most random complexes which appear in literature are homogeneous. For example the multi-parameter random simplicial complexes of \cite{costa}, \cite{costa1}, \cite{costa2}, \cite{costa3} are homogeneous lower random simplicial complexes. 

\subsection{Duality between the lower and upper models} In this subsection we review a result of \cite{farber} describing an Alexander type duality between the upper and lower models. 

Let $\partial \Delta_n$ denote the boundary of the $n$-dimensional simplex, viewed as a simplicial subcomplex. The set of vertices of $\partial \Delta_n$ is $[n]$ and the set of simplexes of $\partial\Delta_n$ is the set of all proper nonempty subsets $\sigma \subsetneq [n]$. 

For any simplex $\sigma\in \partial \Delta_n$ we denote by $\check \sigma$ the simplex spanned by the complementary set of vertices,
i.e. $V(\check \sigma) = [n]-V(\sigma)$. Clearly, $\dim \sigma +\dim \check \sigma = n-1.$

For a simplicial complex $X\subset \partial \Delta_n$ we denote by $\cc(X)$ the simplicial complex defined by the following rule:
\begin{eqnarray}
\sigma\in \cc(X) \iff \check \sigma \notin X. 
\end{eqnarray}

In particular a vertex $i\in [n]$ belongs to $\cc(X)$ 
iff the complementary $(n-1)$-dimensional face is not in $X$. If $\tau\subset \sigma$ then $\check \sigma \subset \check \tau$ and 
$\check \sigma \notin X$ implies $\check \tau\notin X$. This shows that $\cc(X)$ is a simplicial complex. 

As a remark we mention that $\dim X\le r$ if and only if the dual complex $\cc(X)$ contains full $(n-r-2)$-dimensional skeleton of $\Delta_n$.

Clearly the map $X\mapsto \cc(X)$ is involutive, i.e. $\cc(\cc(X))=X$. 

By Corollary 9.7 from \cite{farber}, the Betti numbers of $\cc(X)$ are given by the formulae
\begin{eqnarray}\label{betti}
b_j(\cc(X) )= b_{n-2-j}(X), 
\end{eqnarray}
for $j=0, 1, \dots, n-2. $
Let $\sigma \mapsto p_\sigma$ be a system of probability parameters. {\it The dual system} $\sigma\mapsto p'_\sigma$ is defined  by 
\begin{eqnarray}\label{dualsys}
p'_\sigma= 1-p_{\check \sigma}.
\end{eqnarray}

\begin{theorem}\label{dual}
Let $p_\sigma$ be a system of probability parameters and let $p'_\sigma$ be the dual system. Consider the lower probability measure 
$\up$ on $\Omega_n^\ast$ with respect to the system $p_\sigma$. Besides, consider the upper probability measure $\op'$ on 
$\Omega_n^\ast$ with respect to the dual system $p'_\sigma$. Then the duality map $$X\mapsto \cc(X)$$ is an isomorphism of probability spaces $(\Omega_n^\ast, \up)\to (\Omega_n^\ast, \op')$. 
\end{theorem}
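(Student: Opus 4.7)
The plan is to verify the theorem by directly matching the explicit formulas (\ref{form1}) for the two measures, exploiting the order-reversing character of the complementation operation $\sigma \mapsto \check{\sigma}$. Since $\cc$ is involutive, $X \mapsto \cc(X)$ is automatically a bijection of $\Omega_n^\ast$ onto itself, so the entire content of the theorem is the pointwise identity $\up(X) = \op'(\cc(X))$ for every $X \in \Omega_n^\ast$. Using the dual parameters $p'_\sigma = q_{\check{\sigma}}$ and $q'_\sigma = p_{\check{\sigma}}$, formula (\ref{form1}) gives
$$\op'(\cc(X)) = \prod_{\sigma \in M(\cc(X))} q_{\check{\sigma}} \cdot \prod_{\sigma \notin \cc(X)} p_{\check{\sigma}},$$
so the whole task is to re-index these two products in terms of $X$.

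The second product is immediate: $\sigma \notin \cc(X) \iff \check{\sigma} \in X$, so $\sigma \mapsto \check{\sigma}$ is a bijection from $\{\sigma : \sigma \notin \cc(X)\}$ onto $X$. For the first, I would use that $\tau \subsetneq \sigma \iff \check{\sigma} \subsetneq \check{\tau}$: a simplex $\sigma$ is maximal in $\cc(X)$ iff $\check{\sigma} \notin X$ while every $\check{\sigma'} \subsetneq \check{\sigma}$ belongs to $X$. Since $X$ is closed under taking faces, the latter condition is equivalent to $\partial \check{\sigma} \subseteq X$, so complementation restricts to a bijection $M(\cc(X)) \to E(X)$. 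Substituting these two bijections into the formula above yields
$$\op'(\cc(X)) = \prod_{\tau \in E(X)} q_\tau \cdot \prod_{\tau \in X} p_\tau = \up(X),$$
where the last equality is the lower half of (\ref{form1}).

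The only place where care is required is the identification $M(\cc(X)) \leftrightarrow E(X)$: both inclusions must be checked, and the argument needs both the order-reversing property of $\sigma \mapsto \check{\sigma}$ (to convert \emph{maximal among supersimplexes} into \emph{all subsimplexes of the complement lie in $X$}) and the fact that $X$ is closed under taking faces (to reduce the latter to the single condition $\partial\check{\sigma}\subseteq X$). Once this bijection is in hand, the remaining calculation is bookkeeping, and the conclusion that $X\mapsto \cc(X)$ is an isomorphism of probability spaces follows at once, since it is a bijection between finite sample spaces that carries $\up$ to $\op'$ measure-atomically.
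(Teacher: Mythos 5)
Your argument is correct. The paper itself gives no proof of Theorem \ref{dual} --- it simply cites Proposition 9.8 of \cite{farber} --- so your direct verification via the explicit formulas (\ref{form1}) is a legitimate self-contained substitute. The two re-indexings are exactly right: $\sigma\notin\cc(X)\iff\check\sigma\in X$ handles the $q'$-product, and for the $p'$-product the order-reversal $\sigma'\supsetneq\sigma\iff\check{\sigma'}\subsetneq\check\sigma$ shows that $\sigma$ is maximal in $\cc(X)$ iff $\check\sigma\notin X$ while every nonempty proper subset of $\check\sigma$ lies in $X$, i.e.\ $\check\sigma\in E(X)$; this also works in the degenerate case where $\check\sigma$ is a vertex (empty boundary) and $\sigma$ is a facet of $\partial\Delta_n$ (no proper supersimplexes). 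One tiny quibble: the appeal to $X$ being closed under faces in identifying ``every $\check{\sigma'}\subsetneq\check\sigma$ lies in $X$'' with ``$\partial\check\sigma\subseteq X$'' is redundant, since as $\sigma'$ ranges over the proper supersimplexes of $\sigma$ the complements $\check{\sigma'}$ already exhaust all simplexes of $\partial\check\sigma$; face-closure of $X$ is only needed earlier, to know that $\cc(X)$ is itself a simplicial complex (which the paper establishes before stating the theorem).
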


This statement is part of Proposition 9.8 from \cite{farber}. 

\subsection{Links as random complexes} The technical result of this subsection will be used later in this paper. It is a generalisation of Theorem 6.2 from \cite{farber} and Lemma 3.6 from \cite{costa1}.

Let $V\subset [n]$ be a fixed vertex set of cardinality $k$ and let $\Delta'$ denote the simplex spanned by the set $[n]-V$. 
We shall consider simplicial complexes $Y\in \Omega_n^\ast$ containing $V$. The link $\lk_Y(V)$ is defined as the 
union of all simplexes $\sigma$ which are disjoint from $V$ and such that the simplex $v\sigma$ is contained in $Y$ for any $v\in V$. 
The simplex $v\sigma$ is a cone over $\sigma$ with apex $v$. Clearly $\lk_Y(V)$ is a simplicial subcomplex of $\Delta'$.

\begin{lemma}\label{linkofkvert} 
Let $Y\in \Omega_n^\ast$ be a random simplicial complex with respect to the lower measure with probability parameters 
$\{p_\sigma\}$ containing the set of vertices $V$ and consider the link $\lk_Y(V)\subset \Delta'$ as a random simplicial subcomplex of $\Delta'$. Then $\lk_Y(V)\subset \Delta'$ is a random simplicial subcomplex with respect to the lower probability measure with the set of probability parameters 
$$p'_\tau = p_\tau\cdot\prod_{v\in V} p_{v\tau},$$
where $\tau$ is is a simplex in $\Delta'$. 
\end{lemma}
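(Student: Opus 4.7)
The plan is to work directly with the canonical Bernoulli realisation of the lower model. For each nonempty proper simplex $\sigma \subsetneq [n]$ I introduce an independent Bernoulli random variable $\xi_\sigma$ with $\mathbb{P}(\xi_\sigma = 1) = p_\sigma$; the random hypergraph $X = \{\sigma : \xi_\sigma = 1\}$ then has distribution $\mathbb{P}_n$ as in (\ref{prob0}), and the lower random complex $Y = \underline{X}$ is characterised by $\tau \in Y$ iff $\xi_\rho = 1$ for every nonempty $\rho \subseteq \tau$. Conditioning on $\{V \in Y\}$ is equivalent to fixing $\xi_\rho = 1$ for every nonempty $\rho \subseteq V$; since the remaining $\xi_\sigma$'s are independent of these, their joint law is unaffected by the conditioning.

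Next, for every nonempty simplex $\tau \subseteq [n]-V$ I set
\[
\eta_\tau \;=\; \xi_\tau \cdot \prod_{v\in V} \xi_{v\tau}.
\]
The key combinatorial observation is that for distinct nonempty $\tau, \tau' \subseteq [n]-V$, the families $\{\tau\}\cup\{v\tau : v\in V\}$ and $\{\tau'\}\cup\{v\tau' : v\in V\}$ are disjoint: every simplex in the first family has $V$-part of cardinality at most one and non-$V$-part equal exactly to $\tau$, so it uniquely determines $\tau$. Consequently the $\eta_\tau$'s are mutually independent Bernoullis, each with parameter $p_\tau \prod_{v\in V} p_{v\tau} = p'_\tau$, which is exactly the data generating the lower probability measure on $\Delta'$ with parameter system $\{p'_\tau\}$.

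It then remains to verify that $\lk_Y(V)$, viewed as a subcomplex of $\Delta'$, coincides with the lower complex produced by the $\eta_\tau$'s, i.e.\ that $\tau \in \lk_Y(V)$ iff $\eta_\rho = 1$ for every nonempty $\rho \subseteq \tau$. Unfolding the definition of the link, $\tau \in \lk_Y(V)$ iff $\tau \in Y$ and $v\tau \in Y$ for every $v \in V$; by the Bernoulli description of $Y$ this translates into $\xi_\rho = 1$ for every nonempty $\rho \subseteq \tau$ together with $\xi_{v\rho} = 1$ for every nonempty $\rho \subseteq \tau$ and every $v \in V$ (the conditions on subsets lying wholly inside $V$ are automatic from the conditioning). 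These are precisely the conditions $\eta_\rho = 1$ for every nonempty $\rho \subseteq \tau$, completing the proof. The argument is almost entirely bookkeeping; the one point requiring care is the disjointness claim underpinning independence of the $\eta_\tau$'s, which crucially uses $\tau \subseteq [n]-V$, and I do not anticipate any further obstacle.
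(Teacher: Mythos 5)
Your proof is correct, and it takes a genuinely different route from the paper's. The paper computes, for each subcomplex $L\subset\Delta'$, the probability that $\lk_Y(V)$ contains $L$, shows this equals $\prod_{\tau\in L}p'_\tau$, and then invokes the intrinsic characterisation of the lower measure (Corollary 5.3 of \cite{farber}) to identify the law of the link. You instead argue at the level of the underlying Bernoulli hypergraph: grouping the indicators $\xi_\tau$, $\xi_{v\tau}$ into the products $\eta_\tau$, noting that the index families $\{\tau\}\cup\{v\tau : v\in V\}$ are pairwise disjoint as $\tau$ ranges over simplexes of $\Delta'$ (each such simplex meets $[n]-V$ exactly in $\tau$), so the $\eta_\tau$ are independent Bernoulli variables with parameters $p'_\tau$, and then checking that $\lk_Y(V)$ is exactly the lower complex of the hypergraph $\{\tau:\eta_\tau=1\}$. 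This gives an explicit realisation of the link as a lower random complex and avoids the appeal to the external characterisation result, at the price of redoing by hand the bookkeeping that Corollary 5.3 packages; both arguments ultimately rest on the same disjointness observation. One minor remark: you condition on the full simplex spanned by $V$ lying in $Y$, whereas the lemma (and the paper's normalising factor $\left(\prod_{v\in V}p_v\right)^{-1}$) conditions only on the vertices of $V$ lying in $Y$. This is harmless, since the link is a function only of the variables $\xi_v$ ($v\in V$), $\xi_\tau$ and $\xi_{v\tau}$, none of which is indexed by a face of $V$ of positive dimension, so the two conditionings induce the same law on $\lk_Y(V)$; it would be worth a sentence making that explicit.
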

\begin{proof}
Define the following probability function on the set of all subcomplexes $L\subset\Delta'$
$$\underline{\lambda}(L)=\up(V\subset Y)^{-1}\cdot \sum_{V\subset Y \& \lk_Y(V) = L}\up(Y).$$
Here $\up(V\subset Y)^{-1}= \left(\prod_{v\in V} p_v\right)^{-1}$ is a normalising factor.
We want to compute probability that $\lk_Y(V)$ contains a given subcomplex $L\subset \Delta'$, i.e.
\begin{align*}
\underline{\lambda}(\lk_Y(V)\supset L) &= \up(V\subset Y)^{-1}\cdot \sum_{V\subset Y \& \lk_Y(V) \supset L}\up(Y)\\
&= \up\left(V\subseteq Y\right)^{-1}\cdot \up\left(V\ast L\subset Y\right)\\
&= \left(\prod_{v\in V}p_v\right)^{-1}\cdot \prod_{\sigma\in V\ast L} p_\sigma\\
&= \left(\prod_{v\in V}p_v\right)^{-1}\cdot \left(\prod_{v\in V}p_v\cdot \prod_{\tau\in L}p_\tau\cdot\prod_{v\in V, \tau\in L}p_{v\tau} \right)\\
&= \prod_{\tau\in L}\left[p_\tau\cdot\prod_{v\in V} p_{v\tau}\right] = \prod_{\tau\in L} p_\tau'.
\end{align*}
The statement of Lemma \ref{linkofkvert} now follows from the intrinsic characterisation of the lower probability measure given by  
Corollary 5.3 in \cite{farber}.
\end{proof}

\section{The medial regime. Statements of the main results}\label{sec2}

We shall say that the system of probability parameters $\{p_\sigma\}$ is {\it in the medial regime} if there exist constants 
$p,  \, P\in (0, 1)$ 
such that for any simplex $\sigma\in \Delta_n$ one has
\begin{eqnarray}\label{medial}
0<p\, \le\,  p_\sigma\, \le\,  P<1.
\end{eqnarray}
We emphasise that the numbers $p, \, P$ are supposed to be independent of $n$. In other words, in the medial regime the probability parameters $p_\sigma$ are allowed to approach neither 0 nor 1, as $n\to \infty$. It will be convenient to write
\begin{eqnarray}\label{medial1}
p=e^{-a}, \quad P=e^{-A}\end{eqnarray}
where the $0<A\le a$ are constants.

Next we state two main results of this paper:

\begin{theorem}\label{thm1} Let $Y\in \Omega_n^\ast$ be a random simplicial complex in the medial regime with respect to the lower measure. 
Then:
\begin{enumerate}
\item The dimension of $Y$ satisfies 
\begin{eqnarray*}
\lfloor \beta(n, a)\rfloor - 1\, \le\,  \dim Y\,  \le\,  \beta(n, A)-1 +\epsilon_0,
\end{eqnarray*}
a.a.s. Here $\epsilon_0>0$ is an arbitrary positive constant and we use the notation 
$$\beta(n, y) = \log_2\ln n +\log_2\log_2\ln n - \log_2(y);$$ 
\item $Y$ is connected and simply connected, a.a.s;
\item If the system of probability parameters  $p_\sigma$ is homogeneous (see Definition \ref{homog}) then with probability tending to $1$ as $n\to \infty$ the Betti numbers $b_j(Y)$ vanish for all $$0< j \, \le  \, \log_2\ln n -\log_2 a-1 -\delta_0,$$ where $\delta_0>0$ is an arbitrary constant. 
\end{enumerate}
\end{theorem}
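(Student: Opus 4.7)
The plan is to handle Theorem \ref{thm1} in two stages: first- and second-moment analyses for the dimension bounds of part (1), and Garland's method for the Betti-number vanishing statements of parts (2) and (3). Lemma \ref{linkofkvert} supplies the crucial recursive input: it identifies the link of any simplex in a lower random complex, in distribution, as a lower random complex on the complementary vertex set with explicit modified probability parameters.

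For part (1), the formula $\up(\sigma\subset Y)=\prod_{\emptyset\ne\tau\subseteq\sigma}p_\tau$ combined with the medial bounds gives
\[
P^{2^{d+1}-1}\;\le\;\up(\sigma\subset Y)\;\le\;p^{2^{d+1}-1}
\]
for every $d$-simplex $\sigma$. The upper bound $\dim Y\le\beta(n,A)-1+\epsilon_0$ then follows from Markov applied to $\mathbb E[N_d(Y)]\le\binom{n+1}{d+1}P^{2^{d+1}-1}$, which decays past the threshold by a direct logarithmic calculation. For $\dim Y\ge\lfloor\beta(n,a)\rfloor-1$, a second moment at $d=\lfloor\beta(n,a)\rfloor-1$ suffices: two $d$-simplices $\sigma,\sigma'$ sharing an $e$-face have joint probability $\up(\sigma\cup\sigma'\subset Y)$ of order $p^{2^{d+2}-2^{e+1}-1}$, and summing over $e$ yields $\mathrm{Var}(N_d)=o(\mathbb E[N_d]^2)$, so Chebyshev produces $N_d>0$ a.a.s.

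For parts (2) and (3), the strategy is Garland's method. By Lemma \ref{linkofkvert}, the link of any $(j-1)$-simplex $V\subset Y$ is itself a lower random complex on $[n]-V$ with parameters $p'_\tau=p_\tau\prod_{v\in V}p_{v\tau}$; under homogeneity these are controlled enough that the $1$-skeleton is a Bernoulli random graph whose normalized Laplacian spectral gap exceeds Garland's $H_j$-vanishing threshold with high probability. A union bound over the at most $\binom{n+1}{j}\le n^j$ choices of $V$ then yields $b_j(Y)=0$ a.a.s. The critical scale $j\sim\log_2\ln n-\log_2 a$ arises because, after conditioning on $V\in Y$, the expected number of vertices $u$ with $V\cup\{u\}\in Y$ is of order $n\,e^{-a\cdot 2^j}$: this must stay polynomially large to feed a concentration bound that survives the $n^j$ union-bound cost, which produces exactly the inequality $a\cdot 2^{j+1}<\ln n$ implicit in the stated range $0<j\le\log_2\ln n-\log_2 a-1-\delta_0$. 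Part (2) is then the $j=1$ case combined with Erd\H{o}s--R\'enyi-type connectivity of the $1$-skeleton of $Y$ (whose effective edge probability is bounded below by $p^3$), and the spectral bound on vertex links promotes connectedness to simple connectedness.

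The main obstacle I anticipate is the quantitative calibration at the edge of the stated range: the effective cofactor density $e^{-a\cdot 2^j}$ decays doubly-exponentially in $j$, the union-bound cost $n^j$ grows exponentially in $j$, and Garland's spectral threshold simultaneously tightens, so all three forces collide near $2^{j+1}\sim\ln n/a$, and extracting the $-1-\delta_0$ slack requires careful tracking through the matrix-concentration/Chernoff steps. Homogeneity in part (3) is what keeps the modified link parameters uniformly comparable and this tracking feasible. A secondary subtlety is that the link defined in Section \ref{sec:twomodels} is not quite the classical simplicial link, so Garland's inequality should be either verified for this variant or compared with the standard one.
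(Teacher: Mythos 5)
Your plans for parts (1) and (3) are essentially the paper's own: part (1) is done by a first-moment bound at $\ell\ge\beta(n,A)+\epsilon_0-1$ and a second-moment computation at $\ell\le\beta(n,a)-1$ in which the overlap terms are indexed exactly as you describe (the paper first uses its coupling theorem to reduce to the constant-parameter cases $p_\sigma\equiv P$ and $p_\sigma\equiv p$, which you could avoid by bounding directly, and note that your displayed inequality has $p$ and $P$ transposed — since $p\le P<1$ one has $p^{2^{d+1}-1}\le\up(\sigma\subset Y)\le P^{2^{d+1}-1}$, though your subsequent use of $P$ for the upper bound is the correct one); part (3) is exactly Garland via Lemma \ref{linkofkvert}, homogeneity, the Hoffman--Kahle--Paquette spectral gap theorem for $G(N,\rho)$, and a union bound, with the calibration $a\cdot 2^{j}\lesssim\ln n$ you identify.

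The genuine gap is in part (2). You propose to obtain simple connectivity as ``the $j=1$ case'' of the Garland argument, with ``the spectral bound on vertex links promoting connectedness to simple connectedness.'' This does not work: the Ballmann--\'Swi\c{a}tkowski theorem is a statement about cohomology with $\mathbb{Q}$ (or $\mathbb{R}$) coefficients, so the $j=1$ case yields only $H^1(Y;\mathbb{Q})=0$, i.e.\ $b_1(Y)=0$, which is strictly weaker than $\pi_1(Y)=1$ (a spectral gap $>1/2$ on vertex links gives, via \.Zuk's criterion, property (T) for $\pi_1$, not its triviality — and property (T) groups can be infinite with trivial abelianization). The paper proves simple connectivity by an entirely different, non-spectral route: it applies the Nerve Lemma to the cover of $Y$ by closed vertex stars $\St(v)$. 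The common-neighbour estimate (any $\lfloor\ln n/((1+\epsilon)a)\rfloor$ vertices share a neighbour a.a.s., with failure probability $O(e^{-n^{\epsilon/2}/2})$) shows the nerve contains a full skeleton of dimension $\sim\alpha\log_{p^{-1}}n$, hence is simply connected; and the pairwise intersections $\St(v)\cap\St(w)$ are connected because $\lk_Y(v)\cap\lk_Y(w)$ is, by Lemma \ref{linkofkvert}, again a lower random complex in the medial regime (hence connected with probability $1-O(e^{-n^{1/2}/2})$, surviving a union bound over $n^2$ pairs), and every edge has non-empty link. You would need to import an argument of this kind — or some other direct $\pi_1$ argument such as filling cycles by dense triangles — since no amount of tightening the spectral estimates will reach the fundamental group.
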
 

Thus, under the assumptions of Theorem \ref{thm1} a random complex $Y$ may potentially have nontrivial reduced Betti numbers only in dimensions $j$ satisfying
$$\log_2\ln n  -\log_2a -1 -\delta_0 <j \le   \log_2\ln n +   \log_2\log_2\ln n -\log_2A -1 +\epsilon_0,$$
a.a.s.

To illustrate Theorem \ref{thm1}, let us assume that the integer $n$ is written in the form $n = e^{2^{k}}$. Then the dimension of the random 
complex $Y$ satisfies 
$$\dim Y \, \sim \, k+\log_2 k,$$
and the range of potentially nontrivial Betti numbers is roughly 
$$k\le j \le k+\log_2 k.$$
We see that a lower model random simplicial complex in the medial regime is homologically highly connected with nontrivial Betti numbers concentrated in a thin layer of dimensions near the dimension of the complex.

In the following Theorem we shall describe the properties of the random simplicial complexes in the upper model. If the initial system of probability parameters $p_\sigma$ is in a medial regime (\ref{medial}) then the dual system $p'_\sigma$ (see (\ref{dualsys})) will also be in the medial regime since
$$0<1-P \le p'_\sigma \le 1-p<1.$$
We shall need the 
 {\it dual numbers} 
$$0<a'\le A'$$ defined by the equations
\begin{eqnarray}\label{prime}
e^{-a}+e^{-a'} =1=e^{-A}+e^{-A'}.\end{eqnarray}
One has 
$e^{-A'}\le p'_\sigma \le e^{-a'}.$

\begin{theorem}\label{thm2} Let $Y\in \Omega_n^\ast$ be a random simplicial complex with respect to the upper probability measure 
associated to a system of probability parameters $p_\sigma$. Assume that $p_\sigma$ satisfies $$0<p\le p_\sigma\le P<1,$$ 
where $p=e^{-a}$ and $P=e^{-A}$ are constant, i.e. the system of probability parameters is in the medial regime.
Then, with probability tending to $1$, one has:
\begin{enumerate}
\item The dimension $\dim Y$ equals $n-1$;
\item The maximal dimension $d$ such that $Y$ contains the $(n-d)$-dimensional skeleton $\Delta_n^{(n-d)}$ of the simplex $\Delta_n$ 
satisfies 
$$\lfloor \beta(n, A')\rfloor +1 \, \le\,  d\le \,  \beta(n, a') +1+\epsilon_0.$$
Here $\epsilon_0>0$ is an arbitratry positive constant. 
\item If the system of probability parameters $p_\sigma$ is homogeneous then the reduced Betti numbers $b_j(Y)$ vanish for all dimensions $j$ except possibly
$$\log_2\ln n  -\log_2A' +1 -\delta_0 \, <\, n-j \, \le\,    \beta(n, a') +1+\epsilon_0.$$



\end{enumerate}
\end{theorem}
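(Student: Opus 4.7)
The plan is to deduce Theorem \ref{thm2} from Theorem \ref{thm1} by transferring the upper model random complex $Y$ to its Alexander dual and then invoking the lower-model results. Starting from $Y$ with parameters $p_\sigma\in[e^{-a},e^{-A}]$, I will apply Theorem \ref{dual} to the dual system $p'_\sigma=1-p_{\check\sigma}$ (which is itself medial, satisfying $e^{-A'}\le p'_\sigma\le e^{-a'}$) and use involutivity of $X\mapsto\cc(X)$ to conclude that $Z:=\cc(Y)$ is a lower-model random complex with parameter system $p'_\sigma$. Thus Theorem \ref{thm1} applies to $Z$, with its constants $a$ and $A$ replaced by $A'$ and $a'$ respectively. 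Homogeneity is preserved under the duality, because $p'_\sigma$ depends only on $\dim\check\sigma=n-1-\dim\sigma$.

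For part (1), I will observe that $\dim Y<n-1$ holds precisely when $Y$ contains no codimension-one face $\check{i}$, which by the definition of $\cc$ is equivalent to $Z$ containing every vertex $i\in[n]$. The probability of the latter is at most $(1-p)^{n+1}$, which tends to $0$, so $\dim Y=n-1$ asymptotically almost surely.

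For part (2), the remark recorded just after the definition of $\cc$ states that $\dim Z\le r$ if and only if $\cc(Z)=Y$ contains the full $(n-r-2)$-skeleton of $\Delta_n$, so the maximal $d$ with $\Delta_n^{(n-d)}\subset Y$ equals $\dim Z+2$. Substituting the dimension estimate from Theorem \ref{thm1}(1) applied to $Z$,
$$\lfloor\beta(n,A')\rfloor-1\ \le\ \dim Z\ \le\ \beta(n,a')-1+\epsilon_0,$$
and adding $2$ yields precisely the claimed bounds on $d$.

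For part (3), I will split the complement of the stated non-vanishing window into its two ends. When $n-j>\beta(n,a')+1+\epsilon_0$, part (2) gives $Y\supset\Delta_n^{(n-d)}$ with $d\le\beta(n,a')+1+\epsilon_0$; since $\Delta_n^{(n-d)}$ is $(n-d-1)$-connected and $Y$ is built from it solely by attaching cells of dimension strictly greater than $n-d$, the homology of $Y$ in dimensions $<n-d$ is unaffected, giving $b_j(Y)=0$ whenever $n-j\ge d+1$. When $n-j\le\log_2\ln n-\log_2 A'+1-\delta_0$, I will apply Theorem \ref{thm1}(3) to the homogeneous dual complex $Z$ to obtain $b_i(Z)=0$ for $0<i\le\log_2\ln n-\log_2 A'-1-\delta_0$, then combine with the duality formula (\ref{betti}), $b_{n-2-i}(Y)=b_i(Z)$, setting $i=n-2-j$ to read off the vanishing. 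The main bookkeeping obstacle I anticipate is tracking the substitutions $a\leftrightarrow A'$, $A\leftrightarrow a'$ and the index shift $j\leftrightarrow n-2-j$ without off-by-one errors, and ensuring the boundary cases near $j=n-2$ (where reduced Betti number conventions matter) are handled via Theorem \ref{thm1}(2). The homological input, that attaching cells of dimension $>m$ to a complex containing the full $m$-skeleton of $\Delta_n$ leaves the Betti numbers in dimensions $<m$ untouched, is routine from cellular chain considerations.
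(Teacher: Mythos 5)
Your proposal is correct and follows essentially the same route as the paper: dualize via Theorem \ref{dual} to realise $\cc(Y)$ as a lower-model complex with the medial parameter system $p'_\sigma$, apply Theorem \ref{thm1}, and translate back through the skeleton remark and the Betti-number formula (\ref{betti}). The only (harmless) deviations are cosmetic: the paper proves (1) by directly computing the probability that no $(n-1)$-simplex is selected rather than passing through the dual vertices, and it obtains the upper end of the window in (3) from the dimension bound on $\cc(Y)$ via (\ref{betti}) rather than from your equivalent cell-attachment argument on the contained skeleton.
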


We see that the topology of a typical random simplicial complex $Y$ in the upper model in the medial regime is totally different from one in the lower model. If $n$ is written in the form $n=e^{2^k}$ then $Y$ contains the skeleton $\Delta_n^{(n-d)}$, where 
$$d\sim k +\log_2k$$
and the nontrivial Betti numbers of $Y$ are concentrated in an interval of dimensions of width $\sim \log_2 k$ above the dimension 
$n-d\, \sim \, n-k -\log_2 k$. 

\begin{remark} {\rm Statement (3) of Theorem \ref{thm1} and statement (3) of Theorem \ref{thm2} require the system of probability parameters $p_\sigma$ to be homogeneous. We believe that this assumption is unnecessary. The proofs presented here use a concentration result for the spectral gap of Erd\H os - R\'enyi random graphs;  we are not aware of a more general result of this type applicable to inhomogeneous random graphs. }
\end{remark}

The proofs of Theorems \ref{thm1} and \ref{thm2} are given is the following sections.  Theorem \ref{thm1} is the summary 
of Proposition \ref{prop1}, Corollary \ref{corconn}, Proposition \ref{propsimplyconn} and Theorem \ref{thmtrivialhomology}. 
The proof of Theorem \ref{thm2} is given in section \S \ref{prfthm2}. 

\section{Coupling}

Recall that the upper and lower models of random simplicial complexes depend on the choice of 
a function
associating with each simplex $\sigma\subset [n]$ a
probability parameter $p_\sigma\in [0,1]$.  In this section we compare the properties of random simplicial complexes 
$Y$ and $Y'$ in two models having different probability parameters $p_\sigma$ and $p'_\sigma$. 
We show that for $p_\sigma\le p'_\sigma$ one may {\it \lq\lq realise\rq\rq}\,  $Y$ as a subcomplex of $Y'$. 
This leads to the conclusion that for any monotone property $\mathcal P$ of random simplicial complexes the probability of the event 
$Y\in \mathcal P$ is dominated by the probability of the event $Y'\in \mathcal P$. 

Next we introduce some notations. We denote by $\up$ and $\up'$ the lower probability measures on the set $\Omega_n$ of random simplicial complexes $Y\subset \Delta_n$ associated to the systems of probability parameters $p_\sigma$ and $p'_\sigma$ correspondingly. 
We shall denote by $\op$ and $\op'$ the corresponding upper measures on $\Omega_n$. Consider also the set $\mathfrak P\Omega_n$ of all pairs
$(X, Y)$ consisting of a simplicial complex $X\subset \Delta_n$ and its subcomplex $Y\subset X$. There are two projections
$$\pi_1, \, \pi_2: \, \mathfrak P\Omega_n\to \Omega_n$$
where $\pi_1(X,Y)=X$ and $\pi_2(X,Y)=Y$. 

\begin{theorem}\label{coupling} (A) Suppose that two systems of probability parameters $p_\sigma \le p'_\sigma$ are given. Then there exists a probability measure $\underline \mu$ on $\mathfrak P\Omega_n$ such that its direct images under the projections $\pi_1, \pi_2$ are 
\begin{eqnarray}\label{ten}
(\pi_1)_\ast(\underline \mu)= \up'\quad \mbox{and}\quad 
(\pi_2)_\ast(\underline \mu)= \up.
\end{eqnarray} 
Similarly, there exists a probability measure $\overline \mu$ on $\mathfrak P\Omega_n$ such that its direct images under the projections $\pi_1, \pi_2$ are 
\begin{eqnarray}\label{eleven}
(\pi_1)_\ast(\overline \mu)= \op', \quad
(\pi_2)_\ast(\overline \mu)= \op. 
\end{eqnarray}
(B) Suppose additionally that $p_\sigma = p'_\sigma$ for any simplex $\sigma$ of dimension $\le k$, where $k\ge 0$ is an integer. Then the measure $\underline \mu$ on $\mathfrak P\Omega_n$ is supported on the sets of pairs $(X, Y)$ of simplicial complexes having identical $k$-dimensional skeleta, i.e. $X^{(k)}=Y^{(k)}$. 

(C) If $p_\sigma=p'_\sigma$ for all simplexes $\sigma$ of dimension $> k$ where $k$ is fixed integer) then the measure $\overline \mu$ is supported on the sets of pairs $(X, Y)$ of simplicial complexes satisfying $X-X^{(k)}=Y-Y^{(k)}$. 
\end{theorem}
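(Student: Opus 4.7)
The plan is to build both couplings simultaneously via a single family of i.i.d.\ uniform $[0,1]$ random variables $(U_\sigma)_{\sigma\subsetneq[n]}$, one per simplex. Define two random hypergraphs
$$H = \{\sigma : U_\sigma \le p_\sigma\}, \qquad H' = \{\sigma : U_\sigma \le p'_\sigma\}.$$
Since $p_\sigma \le p'_\sigma$ pointwise, we automatically get $H \subseteq H'$, and each marginal is distributed as the Bernoulli hypergraph measure $\mathbb P_n$ associated with its respective probability system.

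\textbf{Lower coupling.} Set $Y = \underline r(H)$ and $X = \underline r(H')$. I would first verify the monotonicity of $\underline r$: if $H \subseteq H'$ and $\sigma$ is a simplex all of whose faces lie in $H$, then those faces also lie in $H'$, whence $\sigma \in \underline r(H')$. Thus $Y \subseteq X$, so $(X,Y) \in \mathfrak P\Omega_n$. Let $\underline \mu$ be the law of $(X,Y)$. By construction $(\pi_1)_\ast \underline\mu$ is the distribution of $\underline r(H')$, which is $\up'$ by (\ref{measures}); likewise $(\pi_2)_\ast \underline\mu = \up$, establishing (\ref{ten}).

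\textbf{Upper coupling.} Repeat the construction with $\overline r$ in place of $\underline r$, setting $Y = \overline r(H)$, $X = \overline r(H')$, and letting $\overline\mu$ be the law of $(X,Y)$. The required monotonicity of $\overline r$ is even easier: if $\sigma \in \overline r(H)$ then some $\tau \supseteq \sigma$ lies in $H \subseteq H'$, so $\sigma \in \overline r(H')$. The marginal identities in (\ref{eleven}) follow from (\ref{measures}) exactly as before.

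\textbf{Parts (B) and (C).} Both are skeleton-locality remarks about the retractions. In (B), the hypothesis $p_\sigma = p'_\sigma$ for $\dim\sigma\le k$ forces $H$ and $H'$ to agree on all simplexes of dimension $\le k$ in the coupled construction. Whether a simplex $\sigma$ with $\dim\sigma\le k$ lies in $\underline r(H)$ depends only on the membership of its faces in $H$, and those faces have dimension $\le k$; hence $X^{(k)} = Y^{(k)}$. Symmetrically for (C): if $p_\sigma=p'_\sigma$ for $\dim\sigma>k$, then $H$ and $H'$ agree on all simplexes of dimension $>k$, and membership of a simplex $\sigma$ of dimension $>k$ in $\overline r(H)$ depends only on supersets $\tau\supseteq\sigma$, which necessarily have dimension $>k$. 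Thus $X-X^{(k)} = Y-Y^{(k)}$.

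\textbf{Main obstacle.} There is no real difficulty; this is the standard monotone coupling for product Bernoulli measures, pushed forward through the retractions $\underline r$ and $\overline r$. The only point requiring a moment's care is that one must explicitly check the monotonicity of each retraction under hypergraph inclusion so that the push-forwards land in $\mathfrak P\Omega_n$ rather than in $\Omega_n\times\Omega_n$.
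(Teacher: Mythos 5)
Your proof is correct and follows essentially the same route as the paper: the coupling of $H\subseteq H'$ via i.i.d.\ uniforms $U_\sigma$ realizes exactly the measure the paper writes down explicitly in its Lemma \ref{lmcoupling} (include $\sigma$ in $X$ with probability $p'_\sigma$, then retain it in $Y$ with conditional probability $p_\sigma/p'_\sigma$), and both arguments then push this pair measure forward through $\underline r$ and $\overline r$, with parts (B) and (C) following from the same locality observations. Your explicit check that the retractions are monotone under hypergraph inclusion, so that the push-forward lands in $\mathfrak P\Omega_n$, is a point the paper treats as obvious.
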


Let $\mathcal P$ be a property of a simplicial complex which is monotone, i.e. 
 $Y\in \mathcal P$ implies $X\in \mathcal P$ for a simplicial subcomplex $Y\subset X$. 
\begin{corollary}
Under the assumption $p_\sigma\le p'_\sigma$, for any monotone property $\mathcal P$ one has
\begin{eqnarray}
\up(Y\in \mathcal P) \le \up'(Y\in \mathcal P)\quad\mbox{and}\quad \op(Y\in \mathcal P) \le \op'(Y\in \mathcal P). 
\end{eqnarray}
\end{corollary}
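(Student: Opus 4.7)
The proof is a direct application of the coupling from Theorem \ref{coupling}, so my plan has only two substantive steps.

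First I would invoke Theorem \ref{coupling}(A) to obtain the coupling measures $\underline{\mu}$ and $\overline{\mu}$ on $\mathfrak{P}\Omega_n$. The key structural fact I would use is not the content of the marginal identities \eqref{ten} and \eqref{eleven} per se, but rather that $\mathfrak{P}\Omega_n$ is by definition the set of pairs $(X,Y)$ with $Y\subseteq X$; thus both couplings are supported on such pairs. This is what makes monotonicity usable.

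Next I would translate the two probabilities in question into integrals against $\underline{\mu}$. By the marginal identities,
\begin{equation*}
\up(Y\in \mathcal{P}) \;=\; \underline{\mu}\bigl(\{(X,Y)\in \mathfrak{P}\Omega_n : Y\in \mathcal{P}\}\bigr),
\end{equation*}
and similarly
\begin{equation*}
\up'(Y\in \mathcal{P}) \;=\; \underline{\mu}\bigl(\{(X,Y)\in \mathfrak{P}\Omega_n : X\in \mathcal{P}\}\bigr).
\end{equation*}
Now monotonicity of $\mathcal{P}$ does all the work: whenever $(X,Y)\in \mathfrak{P}\Omega_n$ satisfies $Y\in \mathcal{P}$, we have $Y\subseteq X$ and hence $X\in \mathcal{P}$. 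Therefore the event on the first line is contained in the event on the second line, and the first desired inequality follows immediately by monotonicity of $\underline{\mu}$. The argument for $\op(Y\in \mathcal{P})\le \op'(Y\in \mathcal{P})$ is word-for-word identical, using $\overline{\mu}$ in place of $\underline{\mu}$.

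There is essentially no obstacle here: once Theorem \ref{coupling} is granted, the corollary is a one-line consequence of the fact that the coupling measures live on inclusions and $\mathcal{P}$ is upward closed under inclusion. The only thing to be mildly careful about is notational: the symbol $Y$ in the statement of the corollary denotes the ``generic'' random complex under each of the measures $\up,\up',\op,\op'$, whereas in the coupling picture we must distinguish the two projections $\pi_1,\pi_2$; I would make this explicit in the write-up to avoid any confusion.
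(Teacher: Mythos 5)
Your proof is correct and is essentially identical to the paper's own argument: both use the coupling of Theorem \ref{coupling}, rewrite $\up(Y\in\mathcal P)$ and $\up'(Y\in\mathcal P)$ as $\underline\mu$-measures of events in $\mathfrak P\Omega_n$ via the marginal identities, and conclude by containment of events using that the coupling lives on pairs $Y\subseteq X$ and $\mathcal P$ is monotone. Your extra remark distinguishing the two projections is a reasonable clarification but changes nothing of substance.
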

\begin{proof} Applying Theorem \ref{coupling} one has
$$\up(Y\in \mathcal P) = \underline \mu(\left\{(X, Y); Y\in \mathcal P\right\})\le \underline \mu(\left\{(X, Y); X\in \mathcal P\right\})= \up'(X\in \mathcal P).$$
The case of the upper measure $\overline \mu$ is similar. 
\end{proof}

As an example we consider the property $\dim Y\ge d$ where $d$ is an integer. Since it is monotone we obtain:
\begin{corollary}\label{cor531} Under the assumption that $p_\sigma\le p'_\sigma$ for every simplex $\sigma\subset [n]$, 
one has 
$$\up(\dim Y\ge d) \le \up'(\dim Y\ge d) \quad \mbox{and}\quad \op(\dim Y\ge d) \le \op'(\dim Y\ge d)$$
for any integer $d\ge 0$.  Here $\up$ and $\up'$ are lower probability measures on $\Omega_n$ associated to the systems of probability parameters 
$p_\sigma$ and $p'_\sigma$, correspondingly.
\end{corollary}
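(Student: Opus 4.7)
The plan is to derive this as a direct application of the unlabelled corollary immediately preceding it, specialised to the monotone property $\mathcal{P}_d = \{Z \in \Omega_n^\ast : \dim Z \ge d\}$. First I would verify monotonicity: if $Y \subseteq X$ are simplicial subcomplexes of $\Delta_n$ and $Y \in \mathcal{P}_d$, then $Y$ contains some simplex $\sigma$ with $\dim \sigma \ge d$; since every simplex of $Y$ is also a simplex of $X$, we have $\sigma \in X$, and hence $\dim X \ge d$, i.e.\ $X \in \mathcal{P}_d$. This verifies the monotonicity hypothesis required by the preceding corollary.

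Applying that corollary, in both the lower and upper versions, immediately gives
$$\up(\dim Y \ge d) = \up(Y \in \mathcal{P}_d) \le \up'(Y \in \mathcal{P}_d) = \up'(\dim Y \ge d),$$
and the entirely analogous chain of equalities and inequality with $\op$ in place of $\up$ and $\op'$ in place of $\up'$. These are the two claimed inequalities.

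There is no genuine obstacle here: all of the probabilistic content is already packaged into Theorem \ref{coupling} (which produces the coupling measures $\underline{\mu}$ and $\overline{\mu}$ on $\mathfrak{P}\Omega_n$) and into the monotone-property corollary that it immediately implies. The only step specific to Corollary \ref{cor531} is the observation that having dimension at least $d$ is preserved under passing to a larger simplicial complex, which is a one-line consequence of the definition of dimension. If one wished to bypass the coupling framework and prove the statement directly, the natural approach would be to couple, simplex by simplex, the two Bernoulli families $\{p_\sigma\}$ and $\{p'_\sigma\}$ so that each chosen $\sigma$ in the $p_\sigma$-process is also chosen in the $p'_\sigma$-process, and then push this coupling through the retractions $\underline{r}$ and $\overline{r}$ of Section \ref{sec:twomodels}; but this is exactly the content of Theorem \ref{coupling}, so the invocation above is the cleanest route.
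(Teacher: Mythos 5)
Your proof is correct and follows exactly the paper's route: the paper also obtains Corollary \ref{cor531} as an immediate instance of the preceding monotone-property corollary, noting only that $\dim Y\ge d$ is monotone. Your explicit one-line verification of monotonicity is a harmless addition to what the paper leaves implicit.
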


The following arguments will be used in the proof of Theorem \ref{coupling}. 

Let $S$ be a finite set and suppose that for each element $s\in S$ we are given a probability parameter $p_s\in [0,1]$. {\it The Bernoulli measure} $\nu$ on the set $2^S$ of all subsets of $S$ is characterised by the property that for $A\subset S$ one has 
\begin{eqnarray}\label{ber1}
\nu(A) = \prod_{s\in A} p_s \cdot \prod_{s\notin A} (1-p_s).
\end{eqnarray}
Consider now another set of probability parameters $p'_s\in [0,1]$ with the property 
$$p_s\le p'_s$$ for any $s\in S$; let $\nu'$ be the corresponding Bernoulli measure on $2^S$, i.e. 
\begin{eqnarray}\label{ber2}
\nu'(A) = \prod_{s\in A} p'_s \cdot \prod_{s\notin A} (1-p'_s).
\end{eqnarray}
\begin{lemma}\label{lmcoupling}
Let $\mathfrak P\Omega_S$ denote the set of all pairs $(X, Y)$ where $Y\subset X\subset S$. Consider the projections 
$\pi_1, \pi_2: \mathfrak P\Omega_S \to 2^S$ where $\pi_1(X, Y)=X$ and $\pi_2(X,Y)=Y$.
There exists a probability measure $\mu$ on $\mathfrak P\Omega_S$ such that 
\begin{eqnarray}\label{marginals}
(\pi_1)_\ast(\mu) = \nu' \quad\mbox{and}\quad (\pi_2)_\ast(\mu) = \nu.
\end{eqnarray}
If $p_s=p'_s$ for all elements $s$ in a subset $ T\subset S$ then the measure $\mu$ is supported on the set of pairs $(X,Y)$ of subsets of $S$ satisfying $X\cap T= Y\cap T$. 
\end{lemma}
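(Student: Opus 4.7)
The plan is to construct the coupling measure $\mu$ explicitly as a product measure over the elements of $S$, using the hypothesis $p_s \le p'_s$ to allocate probability mass in a way that forces $Y \subset X$.

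For each $s \in S$ independently, I would define a local measure $\mu_s$ on the four-element set of pairs $(\epsilon_X, \epsilon_Y) \in \{0,1\}^2$, where $\epsilon_X$ and $\epsilon_Y$ indicate membership of $s$ in $X$ and $Y$ respectively. Set
\begin{equation*}
\mu_s(1,1) = p_s, \quad \mu_s(1,0) = p'_s - p_s, \quad \mu_s(0,0) = 1-p'_s, \quad \mu_s(0,1) = 0.
\end{equation*}
These weights are nonnegative by the assumption $p_s \le p'_s$ and sum to $1$. The measure $\mu$ on $\mathfrak{P}\Omega_S$ is then defined by taking independent choices across $s \in S$: for a pair $(X,Y)$ with $Y \subset X \subset S$,
\begin{equation*}
\mu(X,Y) = \prod_{s \in Y} p_s \cdot \prod_{s \in X \setminus Y} (p'_s - p_s) \cdot \prod_{s \notin X} (1-p'_s).
\end{equation*}
Pairs with $Y \not\subset X$ receive weight $0$ because $\mu_s(0,1) = 0$, so $\mu$ is indeed supported on $\mathfrak{P}\Omega_S$.

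Next I would check that the marginals of $\mu$ are the required Bernoulli measures. For each $s$, the probability that $s \in X$ under $\mu_s$ is $\mu_s(1,1) + \mu_s(1,0) = p_s + (p'_s - p_s) = p'_s$, and the probability that $s \in Y$ is $\mu_s(1,1) + \mu_s(0,1) = p_s$. Combined with the independence across $s$, this shows $(\pi_1)_\ast \mu = \nu'$ and $(\pi_2)_\ast \mu = \nu$ by comparison with formulas (\ref{ber1}) and (\ref{ber2}).

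For the final assertion, if $p_s = p'_s$ for every $s \in T$, then $\mu_s(1,0) = 0$ for those $s$, so under $\mu$ the events $\{s \in X\}$ and $\{s \in Y\}$ coincide for each $s \in T$; hence $X \cap T = Y \cap T$ almost surely. There is no real obstacle in this argument — the content is essentially that, element by element, the Bernoulli$(p_s)$ and Bernoulli$(p'_s)$ variables can be monotonically coupled (which is classical, e.g.\ via a single uniform $U_s \in [0,1]$, declaring $s \in Y$ when $U_s \le p_s$ and $s \in X$ when $U_s \le p'_s$); the only thing to be careful about is verifying that when $p_s = p'_s$ the coupling collapses onto the diagonal, which is immediate from the construction.
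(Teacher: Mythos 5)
Your construction is correct and is essentially the paper's own proof: expanding the paper's formula $\mu(X,Y)=\prod_{s\in X}p'_s\cdot\prod_{s\notin X}(1-p'_s)\cdot\prod_{s\in Y}\frac{p_s}{p'_s}\cdot\prod_{s\in X-Y}\bigl(1-\frac{p_s}{p'_s}\bigr)$ using $Y\subset X$ gives exactly your product $\prod_{s\in Y}p_s\cdot\prod_{s\in X\setminus Y}(p'_s-p_s)\cdot\prod_{s\notin X}(1-p'_s)$. Your elementwise presentation via the local measures $\mu_s$ makes the marginal and support verifications (which the paper leaves as "verified directly") explicit, and avoids the division by $p'_s$, but the coupling itself is the same.
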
 
\begin{proof}  We define a probability measure $\mu$ on $\mathfrak P\Omega_S$ by the formula:
\begin{eqnarray}\label{ber3}
\mu(X,Y) = \prod_{s\in X} p_s'\cdot \prod_{s\in S- X}(1-p_s')\cdot 
\prod_{s\in Y} \frac{p_s}{p'_s} \cdot \prod_{s\in X-Y} \left(1-\frac{p_s}{p'_s}\right).
\end{eqnarray}
The equalities (\ref{marginals}) can be verified directly. The assumption $p_s\le p'_s$ is used to ensure non-negativity of $\mu$. 
If there exists an element $s\in X-Y$ which lies in $T$, then $p_s=p'_s$ and $\mu(X,Y)=0$ since the last factor in (\ref{ber3}) vanishes. 
\end{proof}

\begin{proof}[Proof of Theorem \ref{coupling}] We apply Lemma \ref{lmcoupling} with $S=2^{[n]}$, the set of subsets of the set of vertices $[n]$. The subsets $X\subset S$ can be identified with hypergraphs and we see that the set $\Omega_S=2^S$ is the set of all hypergraphs with vertices in $[n]$ which in Section \ref{sec:twomodels} was denoted $\Omega_n$. The two systems of probability parameters $p_\sigma$ and $p'_\sigma$ (where $\sigma\in S$ is a simplex) define two Bernoulli probability measures on 
$2^S=\Omega_S=\Omega_n$ which we shall denote by  $\nu$ and $\nu'$ correspondingly, see formulae (\ref{ber1}) and (\ref{ber2}). 

The set of pairs $\mathfrak P\Omega_S$ which appears in Lemma \ref{lmcoupling} can be viewed as the set of pairs of hypergraphs
$(X, Y)$ where $Y$ is a subhypergraph of $X$. Since for any simplex $\sigma$ one has $p_\sigma\le p'_\sigma$, we may apply Lemma 
\ref{lmcoupling} to obtain a probability measure $\mu$ on $\mathfrak P\Omega_S=\mathfrak P\Omega_n$ with the property $(\pi_1)_\ast(\mu)=\nu'$ and $(\pi_2)_\ast(\mu)=\nu$.

Consider the maps $\underline r, \overline r: \Omega_n\to \Omega_n^\ast$ (see (\ref{maps}) in \S \ref{sec:twomodels}) where $\Omega_n^\ast$ denotes the set of all simplicial subcomplexes of $\Delta_n$.  These maps obviously define maps of pairs
$\underline r, \overline r: \mathfrak P\Omega_n\to \mathfrak P\Omega_n^\ast$ and we define the probability measures $\underline \mu, \overline\mu$ on $\mathfrak P\Omega_n^\ast$ by the formulae
\begin{eqnarray}
\underline \mu = (\underline r)_\ast(\mu), \quad \overline \mu = (\overline r)_\ast(\mu).
\end{eqnarray}
We have two commutative diagrams 
$$\xymatrix{\mathfrak P\Omega_n \ar[r]^{\pi_i}\ar[d]^-{\underline r} &\Omega_n\ar[d]^{\underline r}\\
\mathfrak P\Omega^\ast_n \ar[r]^-{\pi_i} &\Omega_n^\ast,
}
\quad \quad
\xymatrix{\mathfrak P\Omega_n \ar[r]^{\pi_i}\ar[d]^-{\overline r} &\Omega_n\ar[d]^{\overline r}\\
\mathfrak P\Omega^\ast_n \ar[r]^-{\pi_i} &\Omega_n^\ast.
}
$$
where $i=1, 2$. 
Applying the definitions, we obtain
$${\pi_1}_\ast(\underline\mu) = {\pi_1}_\ast(\underline r_\ast(\mu))={ \underline r}_\ast({\pi_1}_\ast(\mu) = { \underline r}_\ast(\nu')= \up'.$$
And similarly
$${\pi_2}_\ast(\underline\mu) = {\pi_2}_\ast(\underline r_\ast(\mu))={ \underline r}_\ast({\pi_2}_\ast(\mu) = { \underline r}_\ast(\nu)= \up.$$
This proves formulae (\ref{ten}). Formulae (\ref{eleven}) follow similarly. This proves statement (A). 

To prove statement (B) we engage the last statement of Lemma \ref{lmcoupling} which claims that the constructed measure $\mu$ on $\mathfrak P\Omega_n$ is supported on the set of pairs of hypergraphs $(X, Y)$ having identical $k$-dimensional skeleta. Then obviously 
the measure $\underline \mu =(\underline r)_\ast(\mu)$ is supported on the set of pairs of simplicial complexes having identical $k$-skeleta.

The proof of (C) is similar. If $p_\sigma=p'_\sigma$ for any simplex of dimension greater than $k$ then the measure $\mu$ is supported on the set of pairs of hypegraphs $(X, Y) \in {\mathfrak P}\Omega_n$ which are identical above dimension $k$. This implies that the
direct image measure $\overline \mu =(\overline r)_\ast(\mu)$ is supported on the set of pairs of simplicial complexes which are identical above dimension $k$. 
\end{proof}

\section{Dimension of a lower random simplicial complex in the medial regime}

In this section we shall consider a random simplicial complex $Y\in \Omega_n^\ast$ with respect to the lower model and will impose the medial regime assumptions (\ref{medial}). We shall write 
\begin{eqnarray}
p=e^{-a}, \quad P=e^{-A} \quad \mbox{where}\quad 0<A\le a.
\end{eqnarray}
Let  us denote
\begin{eqnarray}\label{beta}
\beta=\beta(n, y) = \log_2\ln n +\log_2\log_2\ln n - \log_2(y). \end{eqnarray}
\begin{prop}\label{prop1} 
Let $\epsilon_0>0$ be a fixed constant. 
Under the above assumptions the dimension of a random simplicial complex $Y$ satisfies
\begin{eqnarray}\label{dim}
\lfloor\beta(n, a) \rfloor-1 \,  \le\,  \dim Y\,  \le\,   \beta(n, A)-1 +\epsilon_0 ,
\end{eqnarray}
a.a.s.
\end{prop}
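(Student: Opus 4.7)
My plan is to establish the upper and lower bounds separately, with a common preliminary reduction. Since ``$Y$ contains some simplex of dimension $d$'' is monotone in $Y$, the first step will be to apply Corollary \ref{cor531} to couple $Y$ with a homogeneous lower complex whose probability parameter equals $P=e^{-A}$ (for the upper bound on $\dim Y$) or $p=e^{-a}$ (for the lower bound). This reduces the inhomogeneous problem to two homogeneous ones, where all subsequent estimates become explicit.

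For the upper bound $\dim Y \le \beta(n,A)-1+\epsilon_0$ I will use the first moment method. Let $N_d$ count the $d$-simplices of $Y$. In the lower model a $d$-simplex belongs to $Y$ iff each of its $2^{d+1}-1$ non-empty faces lies in the underlying hypergraph, so in the homogeneous case
\[
\mathbb{E}[N_d] \;=\; \binom{n+1}{d+1} P^{2^{d+1}-1} \;\le\; n^{d+1} e^{-A(2^{d+1}-1)}.
\]
Choosing $d=\lceil\beta(n,A)-1+\epsilon_0\rceil$ and unpacking $\beta$, one finds $A\cdot 2^{d+1}\ge 2^{\epsilon_0}\ln n\log_2\ln n$ while $(d+1)\ln n=(1+o(1))\ln n\log_2\ln n$; the resulting gap $(2^{\epsilon_0}-1)\ln n\log_2\ln n$ forces $\mathbb{E}[N_d]\to 0$, and Markov's inequality finishes the upper bound.

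For the lower bound $\dim Y \ge \lfloor\beta(n,a)\rfloor-1$ I will apply the second moment method to $N:=N_{k-1}$ with $k:=\lfloor\beta(n,a)\rfloor$. The same product formula gives $\mathbb{E}[N]=\binom{n+1}{k}p^{2^k-1}$, and combining $a\cdot 2^k\le \ln n\log_2\ln n$ (by choice of $k$) with $k\ln n\ge \ln n\log_2\ln n+\ln n\log_2\log_2\ln n-O(\ln n)$ yields $\mathbb{E}[N]\to\infty$. Partitioning pairs of $(k-1)$-simplices by the size $j$ of their vertex intersection,
\[
\mathbb{E}[N^2]=\sum_{j=0}^{k}\binom{n+1}{k}\binom{k}{j}\binom{n+1-k}{k-j}\,p^{\,2^{k+1}-2^j-1},
\]
where the $j=0$ term equals $(1-o(1))\mathbb{E}[N]^2$. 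A short calculation reduces the rest to verifying
\[
\frac{T_j}{\mathbb{E}[N]^2}\;\le\;\binom{k}{j}\cdot\frac{k^j}{n^j}\cdot e^{a(2^j-1)}\;\longrightarrow\; 0
\]
uniformly for $1\le j\le k$; Chebyshev then gives $N\ge 1$ a.a.s.

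The main obstacle is this uniform second-moment bound. The factor $e^{a\cdot 2^j}$ grows doubly exponentially in $j$, and at the endpoint $j=k$ the quantities $a\cdot 2^k$ and $k\ln n$ are both of leading order $\ln n\log_2\ln n$. The argument must exploit the lower-order surplus $k\ln n - a\cdot 2^k \gtrsim \ln n\log_2\log_2\ln n$ as the sole margin for absorbing $\binom{k}{j}\cdot k^j$. I would split the range of $j$ into small values (where the bound is nearly trivial), intermediate values near $\log_2(\ln n/a)$, and values near $k$ (the delicate case), using Stirling and $\binom{k}{j}\le k^j/j!$ throughout. Everything else is book-keeping around the definition of $\beta$.
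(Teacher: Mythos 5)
Your proposal is correct and follows essentially the same route as the paper: a coupling reduction to the homogeneous cases $p_\sigma\equiv P$ and $p_\sigma\equiv p$, a first-moment bound on the count of $d$-simplices for the upper bound, and a second-moment argument on the count of top-dimensional simplices, decomposed by the size of the vertex intersection, for the lower bound. The uniform control of the cross terms that you flag as the main obstacle is exactly what the paper handles, using the monotonicity of $x\mapsto 2^x/x$ together with the surplus of order $\ln n\cdot\log_2\log_2\ln n$ that you identify.
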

\begin{remark}{\rm 
Note that the quantity 
\begin{eqnarray}\label{error}
\beta(n,A)-\beta(n, a)= \log_2\left(\frac{a}{A}\right)=\log_2\left(\frac{\ln p}{\ln P}\right)\ge 0
\end{eqnarray}
is constant (independent of $n$). Hence Proposition \ref{prop1} determines the dimension of a random complex 
$Y$ with finite error (\ref{error}) while the dimension itself $\dim Y$ tends to infinity. 

In the special case when $p=P$ and $a=A$ we obtain
$\lfloor \beta(n, a)\rfloor-1 \le \dim Y \le  \beta(n, a)-1+\epsilon_0,$ a.a.s. which 
nearly uniquely determines the dimension $\dim Y$.
}
\end{remark}

\begin{proof}[Proof of Proposition \ref{prop1}] We start by establishing the upper bound in (\ref{dim}). Using the monotonicity of dimension we may apply Theorem \ref{coupling} and Corollary \ref{cor531}. 
Therefore in the proof of the upper bound we may assume without loss of generality that $$p_\sigma=P=e^{-A}$$ for any simplex $\sigma$. 

Let $f_\ell: \Omega_n^\ast\to \R$ denote the number of $\ell$-dimensional simplexes in $Y$. Note that as a random variable, $f_\ell=\sum X_\sigma$, where the sum runs over all simplexes $\sigma\subset [n]$ of dimension $\ell$ and 
$X_\sigma$ is a random variable which takes values $0$ and $1$ depending on whether the simplex $\sigma$ is included into the random complex $Y$. We have 
$$\E(X_\sigma) = \prod_{\nu\subset \sigma}p_\nu = P^{2^{\ell+1}-1}.$$
Then 
$$\E(f_\ell) =\binom {n+1}{\ell+1} \cdot P^{2^{\ell+1}-1}.$$
We may estimate the expectation from above as follows
$$\E(f_\ell) \le (n+1)^{\ell+1}\cdot P^{2^{\ell+1}-1}\le \frac{e}{P}  \cdot \left(\exp\left[\ln n - A\cdot \frac{2^{\ell+1}}{\ell+1}\right]\right)^{{\ell+1}}.$$
Since the function $x\mapsto \frac{2^x}{x}$ is monotone increasing for $x\ge 2$ we obtain that for any 
$$\ell\ge \beta(n,A)+\epsilon_0-1=\beta+\epsilon_0 -1$$ 
(where $\epsilon_0>0$ is fixed) one has
$$\ln n - A\frac{2^{\ell+1}}{\ell+1} \le \ln n - A\frac{2^{\beta+\epsilon_0}}{\beta+\epsilon_0}= - (2^{\epsilon_0} -1) \cdot \ln n \cdot \frac{\log_2\ln n }{\beta+\epsilon_0} \le -\frac{1}{2} (2^{\epsilon_0}-1)\cdot \ln n$$
implying 
$$\E(f_\ell) \le \frac{e}{P} n^{-c(\ell+1)}, \quad \mbox{where}\quad c= \frac{1}{2}\cdot (2^{\epsilon_0}-1)>0.$$
We obtain
$$
\sum_{\ell+1\ge \beta+\epsilon_0} \E(f_\ell) \le \frac{e}{P} \sum_{\ell+1\ge \beta+\epsilon_0} n^{-c(\ell+1)} \le \frac{e}{P}
\cdot \frac{n^{-c(\beta +\epsilon_0)}}{1-n^{-c}} \, \to\,  0.
$$

Thus, by the first moment method, $Y$ has no simplexes in any dimension $\ell\ge \beta+\epsilon_0-1$, a.a.s. i.e. we obtain the right inequality in (\ref{dim}). 

Next we prove the left inequality in (\ref{dim}), i.e. the lower bound for the dimension. While doing so we may assume (using Theorem \ref{coupling} and the monotonicity of dimension) that 
$$p_\sigma=p=e^{-a}$$
for any simplex $\sigma$. We assume below that 
\begin{eqnarray}\label{assumption}
\ell\le \beta(n, a)-1
\end{eqnarray} 
and our goal is to show that $f_\ell>0$ with probability tending to $1$ as $n\to \infty$. 
We shall use the following estimates for the binomial coefficient
\begin{eqnarray}
\frac{1}{3} 
\left(\frac{ne}{\ell}\right)^\ell\cdot \ell^{-1/2}\, \le \, 
\binom n \ell \, \le\,  \left(\frac{ne}{\ell}\right)^\ell\cdot \ell^{-1/2}
\end{eqnarray}
which are valid for $1\le \ell<n/2$ and $n$ large enough; it follows from Stirling's formula, see page 4 in \cite{Bo}. 
Hence we obtain 
\begin{eqnarray}
\E(f_\ell) = \binom {n+1}{\ell+1} p^{2^{\ell+1}-1} \ge \left(\frac{n}{\ell}\right)^{\ell+1} p^{2^{\ell+1}-1}= 
p^{-1}\left[\exp\left(\ln\frac{n}{\ell} -a \frac{2^{\ell+1}}{\ell+1}\right)
\right]^{\ell+1}.
\end{eqnarray}
Using (\ref{assumption}) we find that 
$$\frac{a2^{\ell+1}}{\ell+1} \le \ln n \cdot \frac{ \log_2\ln n}{\log_2\ln n +\log_2\log_2\ln n - \log_2a}$$
implying 
$$\ln\frac{n}{\ell} -a \frac{2^{\ell+1}}{\ell+1}\ge \frac{\ln n \cdot \log_2\log_2\ln n}{2 \cdot \log_2\ln n}.
$$
This shows that $\E(f_\ell)\to \infty$. 

We shall use the inequality 
\begin{eqnarray}
\Bbb P(f_\ell>0) \ge \frac{\E(f_\ell)^2}{\E(f_\ell^2)},
\end{eqnarray}
(see p. 54 of \cite{JLR}) and show that for 
under the assumptions (\ref{assumption})
the inverse quantity $\frac{\E(f_\ell^2)}{\E(f_\ell)^2}$ tends to $1$ as 
$ n\to \infty$. Since we know apriori that $\frac{\E(f_\ell^2)}{\E(f_\ell)^2}\ge 1$, it is enough to show that the ratio $\frac{\E(f_\ell^2)}{\E(f_\ell)^2}$ is bounded above by a sequence tending to $1$ as $n\to \infty$. 

As above, $f_\ell=\sum X_\sigma$, where the sum runs over all simplexes $\sigma\subset [n]$ of dimension $\ell$.
Hence $f_\ell^2 = \sum_{\sigma, \tau}X_\sigma X_\tau$ and 
$\E(f_\ell^2) = \sum_{\sigma, \tau}\E(X_\sigma X_\tau)$. We have
$$\E(X_\sigma X_\tau) = \up (\sigma\subset Y\, \&\, \tau\subset Y) = p^{2\cdot 2^{\ell+1}- 2^i-1}$$
where $i$ denotes the cardinality of intersection $\sigma\cap \tau\subset [n]$. 
One therefore obtains
$$\label{eqnexpsq}
\E\left(f_\ell^2\right) 
=\sum_{i=0}^{\ell +1}\binom{n+1}{\ell +1}\cdot \binom{\ell +1}{i}\cdot \binom{n-\ell}{\ell +1-i}\cdot p^{2\cdot 2^{\ell+1} - 2^i - 1}
$$
and since $$\E(f_\ell) =\binom {n+1}{\ell+1} p^{2^{\ell+1}-1}$$ we obtain
$$
\dfrac{\E(f_\ell^2)}{\E(f_\ell)^2} = \sum_{i=0}^{\ell+1} \dfrac{\binom{\ell+1} i\cdot \binom {n-\ell}{\ell+1-i}}{\binom {n+1}{\ell+1}}\cdot p^{-2^i+1}.
$$
We shall denote by $r_i$ the terms in the last sums where $i=0, 1, \dots, \ell+1$. For the term $r_0$ we have 
$$r_0=\frac{\binom{n-\ell}{\ell+1}}{\binom{n+1}{\ell+1}}<1.$$
One goal is to show that the sum of all other terms $r_1+r_2+\dots+r_{\ell+1}$ tends to zero with $n$. 
For the term $r_1$ we have 
$$r_1= \dfrac{(\ell+1)\binom{n-\ell} \ell}{\binom{n+1}{\ell+1}} \cdot p^{-1} \le \dfrac{(\ell+1)\cdot n^\ell}{\left(\dfrac{n}{\ell+1}\right)^{\ell+1}}\cdot p^{-1} =
\dfrac{(\ell+1)^{\ell+2}}{n}\cdot p^{-1}. $$
Using our assumption (\ref{assumption}) and (\ref{beta}) we see that $r_1\to 0$ as $n\to \infty$.  

Next we consider the term $r_i$ with $2\le i \le \ell+1$. Since $p^{-1}=e^a$ and taking into account that the function $\dfrac{2^x}{x}$ is increasing for $x\ge 2$ we obtain
\begin{eqnarray*}
r_i&=& \dfrac{\binom{\ell+1} i\cdot \binom{n-\ell}{\ell+1-i}}{\binom{n+1}{\ell+1}}\cdot p^{-2^{i}+1}\le \frac{(\ell+1)^{\ell+i+1}}{n^i}\cdot p^{-2^i}
\\
&\le &
\beta^{2\beta}\cdot\left\{\exp\left[\dfrac{a2^i}{i}-\ln n\right]\right\}^i
\le \beta^{2\beta}\cdot\left\{\exp\left[\dfrac{a2^\beta}{\beta}-\ln n\right]\right\}^i,
\end{eqnarray*}
where we have used (\ref{assumption}) and the following standard inequalities for the binomial coefficients $$\dfrac{a^b}{b^b}\leq \binom{a}{b}\leq a^b.$$ 
One has 
$$\dfrac{a2^\beta}{\beta} -\ln n = - \ln n \cdot \dfrac{\log_2\log_2\ln n -\log_2 a}{\beta}\le - \ln n \cdot \dfrac{\log_2\log_2\ln n}{2\cdot\log_2\ln n}.$$
Denoting 
$$\gamma=\gamma(n) = \dfrac{\log_2\log_2\ln n}{2\cdot\log_2\ln n}$$
we may write, for $i\ge 2$,
$$r_i \le \beta^{2\beta} \cdot \left\{\exp(-\gamma\cdot \ln n)\right\}^i = \dfrac{\beta^{2\beta}}{n^{i\gamma}}.$$
Clearly, $\gamma\to 0$. Summing up we obtain
$$\sum_{i=2}^{\ell+1} r_i \le \beta^{2\beta}\cdot \dfrac{n^{-2\gamma}}{1- n^{-\gamma}}.$$
The lower bound estimate in (\ref{dim})  would now follow once we know that $n^\gamma\to \infty$ and moreover $\dfrac{\beta^{\beta}}{n^{\gamma}}\to 0$. 
This is equivalent to  $$\beta\cdot \log_2 \beta - \gamma \cdot\ln n \to -\infty.$$ Since $\beta< 2 \log_2\ln n$ it is sufficient to show that 
$$2\log_2\ln n \cdot \log_2(2\log_2\ln n)-\ln n\cdot \dfrac{\log_2\log_2\ln n}{2\log_2\ln n}\to -\infty.$$
The above expression can be written in the form 
\begin{eqnarray}\label{expr}
2\log_2\ln n\cdot\left[1+\log_2\log_2\ln n\cdot \left[1- \dfrac{\ln n}{4(\log_2\ln n)^2}\right]\right]\end{eqnarray}
and since obviously $\dfrac{\ln n}{(\log_2\ln n)^2}\to \infty$, we see that (\ref{expr}) tends to $-\infty$. 

This completes the proof of Proposition \ref{prop1}. 
\end{proof}

\section{Simple connectivity of lower random simplicial complex in the medial regime}

In order to establish connectivity and simple connectivity of random simplicial complex in the medial regime we shall consider the cover by closed stars of vertices and apply the nerve lemma. Recall that the lower probability $\up$ of a random simplicial complex 
$Y\in \Omega_n^\ast$ is given by (\ref{explicitly}) and the medial regime assumptions are (\ref{medial}), see also (\ref{medial1}). 

\subsection{Common neighbours} Recall that {\it a common neighbour} of a set $S\subset Y$ of vertices in a simplicial complex $Y$ is a vertex $v\in Y -  S$ which is connected by an edge to every vertex of $S$. 

\begin{lemma}\label{common} Let $0< \epsilon\le 1$ be fixed. Let $Y\in \Omega_n^\ast$ be a random simplicial complex with respect to the lower measure in the medial regime. Then any set $S$ of 
$$\left\lfloor \frac{\ln n}{(1+\epsilon)a}\right\rfloor$$ vertices of $Y$ have a common neighbour with probability at least 
$1- C\cdot \exp (-\frac{n^{\epsilon/2}}{2}).$ Here $C>0$ is a constant independent of $n$ (which however depends on the value of $p$). 
\end{lemma}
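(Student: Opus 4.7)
The plan is to recognize that a common neighbour of $S$ is precisely a vertex of the link $\lk_Y(S)$, so that Lemma \ref{linkofkvert} reduces the problem to showing that a certain lower-model random complex on $[n]-S$ is nonempty with very high probability. Write $s=\lfloor \ln n/((1+\epsilon)a)\rfloor$ and condition on the event $S\subset Y$ (whose conditional distribution of $\lk_Y(S)$ is given by Lemma \ref{linkofkvert}).

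By Lemma \ref{linkofkvert}, $\lk_Y(S)\subset \Delta'$ is a lower random simplicial complex on $[n]-S$ with probability parameters
\[ p'_\tau \;=\; p_\tau\cdot\prod_{v\in S}p_{v\tau}. \]
A vertex $w\in [n]-S$ is a common neighbour of $S$ in $Y$ exactly when $w$ is a vertex of $\lk_Y(S)$, and this occurs with (conditional) probability $p'_{\{w\}}=p_{\{w\}}\cdot\prod_{v\in S} p_{\{v,w\}}$. The medial lower bound $p_\sigma\ge p=e^{-a}$ gives
\[ p'_{\{w\}} \;\ge\; p^{s+1} \;=\; e^{-a(s+1)}. \]
Moreover, for distinct $w,w'\in[n]-S$ the events $\{w\in \lk_Y(S)\}$ and $\{w'\in \lk_Y(S)\}$ depend on disjoint collections of the defining Bernoulli variables (those indexed by subsets of $S\cup\{w\}$ versus $S\cup\{w'\}$), so they are mutually independent across $w$.

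With independence in hand, the conditional probability that \emph{no} vertex of $[n]-S$ is a common neighbour is bounded by
\[ \prod_{w\in[n]-S}\bigl(1-p'_{\{w\}}\bigr) \;\le\; \bigl(1-p^{s+1}\bigr)^{n+1-s} \;\le\; \exp\bigl(-(n+1-s)\,p^{s+1}\bigr). \]
From the definition of $s$ we have $a(s+1)\le \tfrac{\ln n}{1+\epsilon}+a$, hence $p^{s+1}\ge p\cdot n^{-1/(1+\epsilon)}$. For $n$ large, $n+1-s\ge n/2$, so the exponent is at least $\tfrac{p}{2}\,n^{\,\epsilon/(1+\epsilon)}$. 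Since $0<\epsilon\le 1$ we have $\epsilon/(1+\epsilon)\ge \epsilon/2$, and absorbing the constant factor $p/2$ and lower-order terms into a constant $C=C(p)$ yields the desired bound $C\exp(-n^{\epsilon/2}/2)$.

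The argument contains no real obstacle beyond correctly invoking Lemma \ref{linkofkvert} to get independence and the sharp form of $p'_{\{w\}}$; the only point requiring a bit of care is the exponent comparison $\epsilon/(1+\epsilon)\ge \epsilon/2$, which is exactly why the restriction $\epsilon\le 1$ appears in the statement.
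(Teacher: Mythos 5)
Your per-set computation is correct and is essentially the paper's core estimate: conditioning on $S\subset Y$, the common neighbours of $S$ are exactly the vertices of $\lk_Y(S)$, which by Lemma \ref{linkofkvert} appear independently with probabilities $p'_{\{w\}}=p_{\{w\}}\cdot\prod_{v\in S}p_{\{v,w\}}\ge p^{s+1}$, giving conditional failure probability at most $\bigl(1-p^{s+1}\bigr)^{n+1-s}\le\exp\bigl(-(n+1-s)p^{s+1}\bigr)$. (The paper writes this product down directly; your route through Lemma \ref{linkofkvert} is a legitimate and slightly more careful justification of the independence, which really is only a \emph{conditional} independence given $S\subset Y$.)

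There is, however, a genuine gap: you prove the bound for a single \emph{fixed} set $S$, whereas the lemma must hold for \emph{all} sets of size $s=\lfloor\ln n/((1+\epsilon)a)\rfloor$ simultaneously on one high-probability event. This is how the lemma is used: Corollary \ref{corconn} (connectivity) needs every pair of vertices to have a common neighbour at once, and Corollary \ref{nerve1} needs every set of the given size to have one so that the nerve contains a full skeleton. Accordingly, the paper introduces the counting variable $X_k$ of $k$-element subsets of the vertex set with no common neighbour and applies the first moment method, paying a union-bound factor $\binom{n+1}{s}\le\exp(s\ln n)\le\exp\bigl((\ln n)^2/((1+\epsilon)a)\bigr)$. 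This factor is harmless against your single-set bound $\exp\bigl(-c\,n^{\epsilon/(1+\epsilon)}\bigr)$, so the repair is routine, but as written your argument does not establish the statement in the form in which it is needed. A smaller point (which the paper's own last line shares): at $\epsilon=1$ the exponent $(p/2)n^{\epsilon/(1+\epsilon)}=(p/2)n^{1/2}$ cannot be \lq\lq absorbed into $C$\rq\rq\ to give $\exp(-n^{1/2}/2)$ when $p<1$, since the discrepancy sits inside the exponent; the honest conclusion is $1-C\exp\bigl(-c\,n^{\epsilon/2}\bigr)$ with $c=c(p)>0$, which is all that the later arguments require.
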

The number $a$ which appears in the statement is defined in (\ref{medial1}). 
\begin{proof}
Let $S\subset Y$ be a set of $k$ vertices. A vertex $v\not\in S$ is a common neighbour for $S$ with probability 
$p_v\cdot \prod_{u\in S} p_{(uv)}.$ 
%
Hence, a set $S\subset Y$ has no common neighbours in $Y-S$ with probability
$$\prod_{v\notin S} \left(1-p_v\cdot 
\prod_{u\in S} p_{(uv)}
\right)
\le \left(1-p^{k+1}\right)^{n+1-k}.$$
Let $X_k:\Omega_n^\ast \to \Z$ be the random variable counting the number of $k$ element subsets $S\subset Y$ having no common neighbours in $Y-S$.
Using the above inequality, we see that the expectation $\E(X_k)$  
is bounded above by
\begin{align*}
\binom{n+1}{k}\cdot \left(1- p^{k+1}\right)^{n+1-k}&\leq n^k\cdot \exp\left(-(n+1-k)\cdot p^{k+1}\right)\\
&= \exp\left(k\ln n - (n+1-k)\cdot p^{k+1}\right)\\
&\leq C\cdot \exp\left(k\ln n - n\cdot p^{k+1}\right).
\end{align*}
In the final line we have used the fact that $(k-1) p^{k}$ is bounded for any $k\geq 2$.
For $n$ fixed the function $k\mapsto k\ln n  - n\cdot p^{k+1}$ is monotone increasing. Using this we find that for $ k\leq\frac{\ln n}{(1+\epsilon)a}$ 
$$\E(X_k)\le C\cdot \exp \left(\frac{(\ln n)^2}{(1+\epsilon) a} - n\cdot e^{-a\left(\frac{\ln n}{(1+\epsilon)a}\right)}\right) = 
C\cdot \exp \left(\frac{(\ln n)^2}{(1+\epsilon) a} - n^{\frac{\epsilon}{1+\epsilon}}\right)\le C\exp(-\frac{n^{\epsilon/2}}{2}).
$$
Hence we obtain $$\up(X_k>0) \, \le\,  \E(X_k) \, \le\,  C\exp(-\frac{n^{\epsilon/2}}{2}).$$
This completes the proof.
\end{proof}

\begin{corollary}\label{corconn}
Let $Y\in \Omega_n^\ast$ be a random simplicial complex with respect to the lower measure in the medial regime.
Then the complex $Y$ is connected with probability at least
$$1 - C\exp\left(-\dfrac{n^{1/2}}{2}\right),$$
where $C > 0$ is a constant depending on $p$ and independent of $n$. 
\end{corollary}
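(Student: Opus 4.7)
The plan is to invoke Lemma \ref{common} with $\epsilon = 1$, which concerns subsets of size $k = \lfloor \ln n /(2a)\rfloor$, and combine it with a diameter-$2$ argument. With probability at least $1 - C\exp(-n^{1/2}/2)$, every $k$-element subset of the vertex set $V(Y)$ has a common neighbour in $Y$. Since $k \to \infty$, this statement, together with the event that $Y$ has at least $k$ vertices, will force the $1$-skeleton of $Y$ to have small diameter.

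The key observation is as follows. Suppose every $k$-subset of $V(Y)$ has a common neighbour and $|V(Y)| \geq k$. Given any two distinct vertices $u, v \in V(Y)$, extend $\{u,v\}$ to some $k$-element subset $S \subseteq V(Y)$ and apply the conclusion of Lemma \ref{common} to $S$. A common neighbour $w \in V(Y) \setminus S$ is in particular adjacent to both $u$ and $v$, so the path $u - w - v$ lies in the $1$-skeleton of $Y$. Hence the $1$-skeleton of $Y$ has diameter at most $2$, and $Y$ is connected.

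The remaining ingredient is the event $|V(Y)| \geq k$. Each vertex $i \in [n]$ lies in $Y$ independently with probability $p_i \geq p$, so a Chernoff bound yields $|V(Y)| \geq (n+1)p/2$, which is much larger than $k = O(\ln n)$, with probability at least $1 - \exp(-\Omega(n))$. Combining the two failure events, $Y$ fails to be connected with probability at most $C\exp(-n^{1/2}/2) + \exp(-\Omega(n))$; since $\exp(-\Omega(n))$ is absorbed into $C\exp(-n^{1/2}/2)$ for $n$ large, we recover the bound in the statement. The only nontrivial step is the invocation of Lemma \ref{common}; the rest is routine, and I expect no significant obstacle.
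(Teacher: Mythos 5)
Your proposal is correct and follows essentially the same route as the paper: the paper's proof also applies Lemma \ref{common} with $\epsilon=1$ and concludes connectivity from the existence of common neighbours (i.e.\ a diameter-$2$ argument on the $1$-skeleton). The only difference is that the paper invokes the lemma directly for pairs of vertices (which its proof covers, since the bound on $\E(X_k)$ holds for all $k$ up to $\lfloor \ln n/(2a)\rfloor$ by the monotonicity of $k\mapsto k\ln n - n p^{k+1}$), whereas you read the statement literally and work around it by extending a pair to a $k$-set and adding a Chernoff bound on $|V(Y)|$ --- a valid, if slightly longer, route to the same conclusion.
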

\begin{proof}
Applying Lemma \ref{common} with $\epsilon =1$ we obtain that
any two vertices of $Y$ have a common neighbour in $Y$ with probability at least 
$1-C\exp(-\frac{n^{1/2}}{2})$. 
Then obviously any two vertices can be connected by a path in $Y$, i.e. $Y$ is path-connected with probability at least 
$1-C\exp(-\frac{n^{1/2}}{2})$. 
\end{proof}

\subsection{Simple connectivity}
Recall that a simplicial complex $X$ is said to be simply connected if it is connected and its fundamental group $\pi_1(X,x_0)$ is trivial. 
Our goal is to prove the following statement:
\begin{prop}\label{propsimplyconn}
A random simplicial complex $Y\in \Omega_n^\ast$ with respect to the lower probability measure in the medial regime 
is simply connected, a.a.s.
\end{prop}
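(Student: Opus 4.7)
The plan is to apply the Nerve Theorem (in the form for a cover of a simplicial complex by subcomplexes, e.g.\ Björner's version) to the cover of $Y$ by the closed stars $U_v:=\overline{\St}_Y(v)$, $v\in V(Y)$. Each $U_v$ is the cone $\{v\}\ast \lk_Y(v)$, hence simply connected. In this version of the theorem it will suffice to verify a.a.s.\ that (i) every non-empty pairwise intersection $U_u\cap U_v$ is path-connected, and (ii) the nerve $\mathcal{N}$ of the cover is simply connected; the theorem will then propagate simple connectivity from $\mathcal{N}$ to $Y$, and combined with Corollary \ref{corconn} this yields the claim.

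For (ii), observe that $\sigma\in U_{v_0}\cap\cdots\cap U_{v_k}$ iff $\sigma\cup\{v_i\}\in Y$ for every $i$, so any common neighbour $z$ of $v_0,\dots,v_k$ in $Y$ supplies the vertex $\{z\}$ in the intersection. Lemma \ref{common} with $\epsilon=1$, together with a union bound over the $O(n^3)$ triples of vertices, shows that a.a.s.\ every triple in $V(Y)$ has a common neighbour in $Y$. Hence $\mathcal{N}$ a.a.s.\ contains the full $2$-skeleton of the simplex on $V(Y)$, which is simply connected.

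The substantive step is (i). Fix $u,v\in V(Y)$. The $1$-skeleton of $W:=U_u\cap U_v$ has as vertex set the common neighbours of $u$ and $v$ (together with $u,v$ themselves if $uv\in Y$), and an edge $\{w_1,w_2\}$ is present iff both triangles $\{u,w_1,w_2\}$ and $\{v,w_1,w_2\}$ lie in $Y$. To connect two arbitrary vertices $w_1,w_2\in W$, I would search for a witness vertex $z\in[n]\setminus\{u,v,w_1,w_2\}$ such that the following constantly-many simplices all lie in $Y$: the edges $zu,zv,zw_1,zw_2$ together with the triangles $\{u,w_i,z\}$ and $\{v,w_i,z\}$ for $i=1,2$. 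Such a $z$ makes $\{w_1,z\}$ and $\{w_2,z\}$ into edges of $W$, producing the path $w_1\to z\to w_2$. Conditional on the simplices not involving $z$ (which are exactly what determine whether $w_1,w_2\in W$), the witness events for distinct $z$'s are independent, and each has probability at least $p^c$ in the medial regime for a constant $c$ independent of $n$. Hence the conditional probability that no suitable $z$ exists is at most $(1-p^c)^{n-O(1)}=\exp(-\Omega(n))$, and a union bound over the $O(n^4)$ quadruples $(u,v,w_1,w_2)\in[n]^4$, together with the simpler sub-cases in which $w_i\in\{u,v\}$, settles (i).

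The main obstacle is isolating the correct combinatorial witness in step (i): once one sees that a single vertex $z$ meeting a bounded list of local neighbour-and-triangle conditions suffices to join $w_1$ and $w_2$ inside $W$, the probabilistic estimates are routine, since the medial regime provides a uniform positive lower bound on every $p_\sigma$ and leaves ample room to absorb the polynomial union bound. The application of the Nerve Theorem and the analysis of the nerve itself are essentially bookkeeping by comparison.
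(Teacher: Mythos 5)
Your proposal is correct and follows essentially the same route as the paper: the Nerve Lemma applied to the cover of $Y$ by closed vertex stars, with Lemma \ref{common} supplying simple connectivity of the nerve via common neighbours of triples, and a common-neighbour argument giving connectivity of the pairwise intersections. The only real difference is in that last step, where the paper identifies $\lk_Y(v)\cap\lk_Y(w)$ as another medial-regime lower random complex (Lemma \ref{linkofkvert}) and reuses Corollary \ref{corconn}, handling the case $(vw)\in Y$ by showing every edge has nonempty link, whereas you unwind the same estimate into an explicit witness vertex $z$ with a bounded list of incidence conditions --- identical content in different packaging.
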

The proof will consist of applying the Nerve Lemma (see \cite{bjornertop}, Theorem 10.6) to the cover $\U$ of $Y$ formed by the closed 
stars of vertexes. Recall that for a vertex $v\in Y$ the closed star $\St(v)\subset Y$ is the union of all closed simplexes $\sigma\in Y$ such that $v\in \sigma$. 
The nerve $\N(\U)$ of this cover is the simplicial complex with the vertex set identical to the vertex set of $Y$ and a set $S$ of vertices of $Y$ forms a simplex in $\N(\U)$ iff the intersection 
\begin{eqnarray}\label{intersection}
\cap_{v\in S}\St(v) \not=\emptyset
\end{eqnarray}
is not empty. Note that this intersection (\ref{intersection}) is not empty if the set of vertexes $S$ has a common neighbour. 
Rephrasing Lemma \ref{common} we obtain:

\begin{corollary}\label{nerve1}
Let $Y\in \Omega_n^\ast$ be a random simplicial complex with respect to the lower probability measure in the medial regime. Let $\U$ denote the cover of $Y$ formed by the closed stars of vertexes of $Y$. Then for any constant $0<\alpha<1$, the 
nerve complex $\N(\U)$ contains the full $\left\lfloor \alpha\cdot \log_{(p^{-1})} n\right\rfloor$-dimensional skeleton of the simplex spanned by the vertex set of $Y$. In particular, the nerve complex $\N(\U)$ is $\left(\left\lfloor \alpha\cdot \log_{(p^{-1})} n\right\rfloor-1\right)$-connected, a.a.s.
\end{corollary}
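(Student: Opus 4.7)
The plan is to derive Corollary \ref{nerve1} directly from Lemma \ref{common}, using the elementary observation that a common neighbour of a vertex set $S\subseteq V(Y)$ is precisely a witness that $S$ is a simplex of $\N(\U)$: if $w\in V(Y)\setminus S$ is adjacent in $Y$ to every vertex of $S$, then each edge $\{v,w\}$ lies in $\St(v)$, so $w\in \bigcap_{v\in S}\St(v)$, and hence $S\in \N(\U)$.

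First I would translate the exponent: since $p=e^{-a}$, one has $\log_{(p^{-1})} n = \ln n / a$. Given $0<\alpha<1$, pick any $\alpha'\in(\alpha,1)$ and set $\epsilon=1/\alpha'-1>0$, so that for $n$ sufficiently large
$$\lfloor \alpha\log_{(p^{-1})} n\rfloor + 1 \le \left\lfloor \frac{\ln n}{(1+\epsilon)a}\right\rfloor.$$
Lemma \ref{common} applied with this $\epsilon$ then asserts that with probability at least $1-C\exp(-n^{\epsilon/2}/2)$ every set of $\lfloor \ln n / ((1+\epsilon)a)\rfloor$ vertices of $Y$ has a common neighbour in $Y$. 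Since any subset of a set with common neighbour $w$ inherits $w$ as a common neighbour, every $S\subseteq V(Y)$ of size at most $\lfloor \alpha\log_{(p^{-1})} n\rfloor+1$ has a common neighbour---extending $S$ inside $V(Y)$ to the critical size when necessary, which is possible a.a.s.\ because $|V(Y)|$ is a sum of independent Bernoullis with parameters bounded below by $p>0$ and hence concentrates around a value of order $n$, well above $\log n$.

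By the opening observation, every such $S$ is therefore a simplex of $\N(\U)$, so $\N(\U)$ contains the full $\lfloor \alpha\log_{(p^{-1})} n\rfloor$-dimensional skeleton of the simplex on $V(Y)$, a.a.s. For the connectivity conclusion I would invoke the standard fact that the $d$-skeleton of a simplex is $(d-1)$-connected (being homotopy equivalent to a wedge of $d$-spheres), together with the observation that attaching cells of dimension $\ge d+1$ to a $(d-1)$-connected CW complex preserves $(d-1)$-connectivity. Setting $d=\lfloor\alpha\log_{(p^{-1})} n\rfloor$ then yields that $\N(\U)$ itself is $(d-1)$-connected, a.a.s.

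The only real subtlety here is essentially bookkeeping: the off-by-one between a $d$-simplex containing $d+1$ vertices and Lemma \ref{common} producing common neighbours for sets of one specified cardinality. This is absorbed cleanly by the slack $\alpha'>\alpha$ introduced above, so that the critical cardinality delivered by Lemma \ref{common} strictly exceeds $\lfloor \alpha\log_{(p^{-1})} n\rfloor+1$ for all $n$ large enough. No heavier machinery is required.
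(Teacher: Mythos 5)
Your argument is correct and is essentially the paper's own (the paper simply says the corollary follows by ``rephrasing Lemma \ref{common}''): a common neighbour of $S$ witnesses $\bigcap_{v\in S}\St(v)\neq\emptyset$, the slack $\alpha'>\alpha$ absorbs the translation between $\epsilon$ and $\alpha$ together with the off-by-one between cardinality and dimension, and the connectivity statement follows since the full $d$-skeleton of a simplex is $(d-1)$-connected and adding simplexes of dimension $\ge d+1$ preserves this. Your explicit handling of the subset-inheritance of common neighbours and of the need for $|V(Y)|$ to exceed the critical size are exactly the bookkeeping details the paper leaves implicit.
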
 
Recall that the parameter $0<p<1$ of Lemma \ref{nerve1} is the one which appears in the definition of the medial regime, see (\ref{medial}). 

\begin{proof}[Proof of Proposition \ref{propsimplyconn}]
First we recall the Nerve Lemma, see \cite{bjornertop}, Theorem 10.6:

\begin{lemma}
\label{lemnerve}
If $Y$ is a simplicial complex and $\{S_i\}_{i\in I}$ is a family of subcomplexes covering $Y$ such that for any $t\geq 1$ every non-empty intersection
$S_{i_1}\cap\dots\cap S_{i_t}$
is $(k-t+1)$-connected.  Then $Y$ is $k$-connected if and only if the nerve complex $\mathcal{N}(\{S_i\}_{i\in I})$ is $k$-connected.
\end{lemma}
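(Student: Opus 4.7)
The plan is to prove the Nerve Lemma by constructing an auxiliary ``blow-up'' space $\Delta$ sitting between $Y$ and the nerve $\mathcal{N} = \mathcal{N}(\{S_i\}_{i\in I})$, and showing that the two resulting projections are $k$-equivalences. Concretely, for a simplex $\sigma = \{i_0,\dots,i_t\}$ of $\mathcal{N}$ set $S_\sigma := S_{i_0} \cap \dots \cap S_{i_t}$, and let
$$\Delta \;=\; \bigcup_{\sigma \in \mathcal{N}} S_\sigma \times |\sigma| \;\subseteq\; Y \times \mathcal{N},$$
glued along the face maps of $\mathcal{N}$ and the natural inclusions $S_\sigma \hookrightarrow S_\tau$ for $\tau \subseteq \sigma$. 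Let $\pi_Y: \Delta \to Y$ and $\pi_{\mathcal{N}}: \Delta \to \mathcal{N}$ be the two coordinate projections; the goal is to show $\pi_Y$ is a homotopy equivalence and $\pi_{\mathcal{N}}$ is $k$-connected.

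The projection $\pi_Y$ is the easy half. Its fiber over a point $y \in Y$ is naturally identified with the geometric realisation of the full simplex on the index set $\{i \in I : y \in S_i\}$, which is contractible. Because the cover is by subcomplexes, $\pi_Y$ is a cellular map whose contractible fibers vary in a combinatorially controlled way over $Y$, so a standard partition-of-unity or simplicial-collapse argument produces a section together with a fiberwise deformation retract of $\Delta$ onto it, exhibiting $\pi_Y$ as a homotopy equivalence.

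The projection $\pi_{\mathcal{N}}$ is the main obstacle. Its fiber over a point in the interior of a $t$-simplex $\sigma$ equals $S_\sigma$, which by hypothesis is $(k - t)$-connected. I would filter $\mathcal{N}$ by its skeleta and build $\Delta$ up cell by cell, each step being a homotopy pushout gluing $S_\sigma \times |\sigma|$ along $S_\sigma \times \partial|\sigma|$. The Blakers--Massey theorem (equivalently, the Leray spectral sequence for $\pi_{\mathcal{N}}$) then converts the $(k - t)$-connectivity of the fiber over each $t$-cell into precisely the connectivity increment needed at that attaching map; the indices $(k - t + 1)$ in the hypothesis are calibrated so that the loss incurred at each cell attachment is exactly absorbed, and $\pi_{\mathcal{N}}$ emerges $k$-connected. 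The book-keeping here is the only substantive technicality; an alternative route is induction on the number of maximal simplices of $\mathcal{N}$, using van Kampen's theorem to control $\pi_1$ and Mayer--Vietoris together with Hurewicz to control higher connectivity.

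Combining the two projections, the zig-zag
$$Y \;\xleftarrow{\;\pi_Y\;}\; \Delta \;\xrightarrow{\;\pi_{\mathcal{N}}\;}\; \mathcal{N}$$
induces an isomorphism on $\pi_j$ for $j < k$ and a surjection on $\pi_k$, so $Y$ and $\mathcal{N}$ have the same $k$-type. The biconditional ``$Y$ is $k$-connected iff $\mathcal{N}$ is $k$-connected'' follows immediately.
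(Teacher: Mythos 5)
The paper gives no proof of this lemma: it is quoted verbatim from Bj\"orner's Handbook survey (Theorem 10.6 of \cite{bjornertop}), so there is no in-paper argument to compare against. Your sketch follows the standard route used in that literature — the Mayer--Vietoris blow-up $\Delta$, the Projection Lemma for $\pi_Y$, and a skeletal induction for $\pi_{\mathcal N}$ — and the architecture, the identification of the fibers, and the index count ($S_\sigma$ is $(k-t)$-connected over a $t$-dimensional simplex $\sigma$) are all correct.

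There is, however, a genuine off-by-one gap in your final step. You aim to show $\pi_{\mathcal N}$ is $k$-connected, i.e.\ an isomorphism on $\pi_j$ for $j<k$ and an epimorphism on $\pi_k$, and then assert that the biconditional ``follows immediately.'' It does not: a $k$-connected map transfers $k$-connectedness from source to target (vanishing of $\pi_k(\Delta)$ plus surjectivity kills $\pi_k(\mathcal N)$), but not from target to source, since a surjection $\pi_k(\Delta)\twoheadrightarrow 0$ says nothing about $\pi_k(\Delta)$. So your argument as written yields only ``$Y$ $k$-connected $\Rightarrow$ $\mathcal N$ $k$-connected,'' and the converse — which is precisely the direction the paper uses in Proposition \ref{propsimplyconn}, deducing simple connectivity of $Y$ from that of the nerve — is the one you lose. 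The repair is that the hypotheses are calibrated to make $\pi_{\mathcal N}$ $(k+1)$-connected, not merely $k$-connected: over a $t$-cell the local map $S_\sigma\times|\sigma|\to|\sigma|$ is $(k-t+1)$-connected, and the $t$ levels of connectivity spent in gluing over a $t$-cell return you uniformly to $k+1$. Running the skeletal induction with target $(k+1)$ (isomorphisms on $\pi_j$ for all $j\le k$) — or, on homology, using the Mayer--Vietoris spectral sequence whose $E_1$-term $\bigoplus_{\dim\sigma=s}\tilde H_t(S_\sigma)$ vanishes for $1\le t\le k-s$, so that $H_j(Y)\cong H_j(\mathcal N)$ for $j\le k$, combined with van Kampen and Hurewicz — closes the gap and yields the stated equivalence.
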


To prove Proposition \ref{propsimplyconn} we shall apply Lemma \ref{lemnerve} with $k=1$ to the cover $\{\St(v)\}$ of $Y$ formed by closed stars of vertexes $v\in Y$. Each of the stars $\St(v)$ is contractible and the nerve complex $\N(\{\St(v)\})$ is simply connected (see Corollary \ref{nerve1}), a.a.s. To complete the proof we need to show that any nonempty intersection $\St(v)\cap \St(w)$ is connected, a.a.s.

Note that
\begin{eqnarray}
\St(v)\cap \St(w) = \left\{
\begin{array}{lll}
\lk_Y(v) \cap \lk_Y(w), &\mbox{if} & (vw)\notin Y,\\
(\lk_Y(v) \cap \lk_Y(w))\cup \St(vw), &\mbox{if} & (vw)\in Y.
\end{array}
\right. 
\end{eqnarray}
Here $(vw)$ denotes the edge connecting $v$ and $w$. 

We shall denote by $A_n, B_n, C_n\subset \Omega_n^\ast$ the following events. 

Let $A_n\subset \Omega_n^\ast$ denote the set of all simplicial complexes $Y$ such that for any two vertices $v, w\in Y$ the intersection
$\lk_y(v)\cap \lk_Y(w)$ is connected. 

$B_n\subset \Omega_n^\ast$ will denote the set of all simplicial complexes $Y$ which have no edges $e\subset Y$ of degree zero, i.e. every edge $e\subset Y$ is incident to a 2-simplex $\sigma\subset Y$. 

And finally, the symbol $C_n\subset \Omega_n^\ast$ will denote the set of all simplicial complexes $Y$ such that every triple of its vertexes has a common neighbour. 

We note that any $Y\in A_n\cap B_n \cap C_n$ is simply connected. Indeed, taking the cover by the closed stars of vertices we see that 
the intersection $\St(v)\cap \St(w) $ is connected; if $(vw)\not\subset Y$ then it follows from the definition of $A_n$ and 
if $(vw)\subset Y$ then $\St(vw)$ is contractible (and hence connected) and has nontrivial intersection with $\lk_Y(v)\cap\lk_Y(w)$ as
follows from our assumption $Y\in B_n$; this shows that 
$\St(v)\cap \St(w)$ is connected. Finally we apply the Nerve Lemma \ref{lemnerve} using our assumption $Y\in C_n$. 

To complete the proof we only need to show that $\up(A_n)\to 1$ and $\up(B_n)\to 1$; Lemma \ref{common} tells us that $\up(C_n)\to 1$. 

Consider two fixed vertexes $v, w\in Y$ and consider the intersection $\lk_Y(v)\cap \lk_Y(w)$. By Lemma \ref{linkofkvert} this intersection 
is a random simplicial complex with respect to the lower measure with probability parameters $p'_\tau = p_\tau p_v p_w$, i.e. it is also a lower model random simplicial in the medial regime. By Corollary \ref{corconn} the intersection $\lk_Y(v)\cap \lk_Y(w)$ is disconnected with probability at most $C\exp(-\frac{n^{1/2}}{2})$ and hence the expected number of pairs of vertices with disconnected 
$\lk_Y(v)\cap \lk_Y(w)$ is bounded above by
$$C n^2 \exp(-\frac{n^{1/2}}{2})\to 0.$$
This proves that $\up(A_n)\to 1$. 

The proof of $\up(B_n)\to 1$ is similar. By Theorem 6.2 from \cite{farber}, the link of an edge $e=(vw)\subset Y$ is a random simplicial complex with respect to the lower model with probability parameters $$p'_\tau= p_\tau p_v p_u\ge p^3$$ and hence the probability that an edge $e$ has empty link is bounded above by 
$$(1-p^3)^{n-1} \le \exp(-p^3 n)$$
for $n$ large enough. Thus, the expected number of edges $e\subset Y$ with empty links is at most 
$$\binom {n+1} 2 \cdot e^{-p^3 n} \to 0,$$
implying $\up(B_n)\to 1$ by the first moment method. This completes the proof of Proposition \ref{propsimplyconn}. 
\end{proof}

\section{Vanishing of the Betti numbers}
The main result of this section states that homogeneous (see Definition \ref{homog}) lower model random simplicial complexes in the medial regime (see (\ref{medial})) have trivial rational homology in every dimension not exceeding 
$$ \log_2\ln n -\log_2 a - 1 -\delta_0,$$
where $p = e^{-a}$ as in (\ref{medial1}) and $\delta_0>0$ is any constant. 

\begin{theorem}\label{thmtrivialhomology}
Let $Y\in \Omega_n^\ast$ be a homogeneous random simplicial complex with respect to the lower probability measure in the medial regime. 
Then for any constant $\delta_0>0$, the rational homology of $Y$ vanishes, 
$$H_j(Y;\Bbb Q) = 0,$$
for all $$0 < j\,  \le\,    \log_2\log_{(p^{-1})} n -1-\delta_0,$$
a.a.s. 
\end{theorem}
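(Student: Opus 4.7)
The plan is to apply Garland's method. In its form used in stochastic topology (as in the work of Garland, Ballmann--Swiatkowski, Kahle, and Hoffman--Kahle--Paquette), vanishing of $H_j(Y;\mathbb{Q})$ follows once one verifies that for every $(j-1)$-simplex $\tau\subset Y$ the link $\lk_Y(\tau)$ is nonempty and connected, and the second smallest eigenvalue of the normalized graph Laplacian of its $1$-skeleton is strictly greater than $1-\tfrac{1}{j+1}$. So the task reduces to proving this spectral inequality uniformly over all $(j-1)$-simplices $\tau$ with probability tending to $1$.

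The first step is to apply Lemma \ref{linkofkvert} with $V=\tau$ of cardinality $j$. In the homogeneous medial regime this realises $\lk_Y(\tau)$ as itself a homogeneous lower-model random complex on the simplex $\Delta'$ spanned by the remaining $N=n-j+1$ vertices, with probability parameters $p'_i=p_i\prod_{v\in\tau}p_{v\cdot}$ depending only on the dimension $i$ and still bounded below away from $0$ (by smaller constants). In particular, the $1$-skeleton of $\lk_Y(\tau)$ is distributed as the Erd\H os--R\'enyi random graph $G(N,\mu_j)$, whose edge probability $\mu_j=(p'_0)^{2}\,p'_1$ is bounded below by an explicit power of $p$ in terms of $j$.

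The second step is to invoke a sharp concentration estimate for the spectral gap of $G(N,\mu)$ in the homogeneous regime, such as the Hoffman--Kahle--Paquette inequality or the Oliveira / Chung--Radcliffe eigenvalue bound: when $\mu N$ is sufficiently large compared to $(j+1)^{2}\log N$, the bound $\lambda_2>1-\tfrac{1}{j+1}$ holds outside an exceptional event of super-polynomially small probability. For $j\le\log_2\log_{p^{-1}}n-1-\delta_0$, the lower bound on $\mu_j$ together with $N\sim n$ makes this quantitative inequality hold comfortably for every $\delta_0>0$ and all sufficiently large $n$. A union bound over the $\le\binom{n+1}{j}\le n^{j}$ candidate $(j-1)$-simplices $\tau$ then finishes the proof, since the per-simplex failure probability decays faster than any polynomial in $n$.

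The chief difficulty is the fine calibration between the two quantitative inputs --- the Garland spectral threshold, which deteriorates like $1-\tfrac{1}{j+1}$, and the concentration bound for $\lambda_2(G(N,\mu))$ in the sparse regime $\mu N=\Theta(\log N)$ --- so that the bound on $j$ lands exactly at the claimed value $\log_2\log_{p^{-1}}n-1-\delta_0$. The homogeneity hypothesis enters precisely at this step, because sharp spectral-gap concentration is presently available only for the standard Erd\H os--R\'enyi graph; the remark in the paper notes that an analogous estimate for inhomogeneous random graphs would remove the homogeneity restriction altogether.
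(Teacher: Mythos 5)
Your overall strategy is exactly the paper's: Garland's method in the Ballmann--\'Swi\c{a}tkowski form reduces the vanishing of $H_j(Y;\Bbb Q)$ to a spectral-gap estimate for the $1$-skeleta of links of $(j-1)$-simplices, the links are identified as lower-model random objects via Lemma \ref{linkofkvert}, the Hoffman--Kahle--Paquette concentration theorem supplies the spectral gap, and a union bound over simplices finishes. However, there is a genuine error in your second step, and it sits precisely at the point you yourself flag as ``the chief difficulty'' and then do not carry out. The $1$-skeleton of $\lk_Y(\tau)$ is \emph{not} distributed as $G(N,\mu_j)$ with $N=n-j+1$ and $\mu_j=(p'_0)^2p'_1$: the events ``edge $uv$ present'' and ``edge $uw$ present'' are positively correlated through the vertex $u$, and, more importantly, only a $p'_0$-fraction of the $n-j+1$ candidate vertices actually lie in the link, so the graph relevant for the normalised Laplacian lives on roughly $N'\approx n\,p'_0$ vertices, not on $n-j+1$ vertices.

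This is not merely cosmetic, because it changes the threshold. With $j=\ell+1\le\log_2\log_{p^{-1}}n-1-\delta_0$ one has $p'_0,\,p'_1\ge p^{2^{\,j}}=n^{-2^{-x}}$ with $x\ge 1+\delta_0$. Your quantity $\mu_j N\approx n^{\,1-3\cdot 2^{-x}}$ tends to $0$ whenever $\delta_0<\log_2(3/2)$, so the hypothesis $\mu N\gtrsim\log N$ of the spectral-gap theorem fails at the claimed range of $j$; your calibration would only recover $j\le\log_2\log_{p^{-1}}n-\log_2 3-\delta_0$. The correct bookkeeping (which is what the paper does) is to first show by Chernoff that the number of link vertices concentrates at $N'\approx n^{\,1-2^{-x}}$, and then observe that \emph{conditional on the vertex set} the induced graph is a genuine Erd\H{o}s--R\'enyi graph with edge probability $\rho=p'_1\ge n^{-2^{-x}}$, so that $\rho N'\approx n^{\,1-2^{1-x}}\ge n^{\,1-2^{-\delta_0}}\gg\log N'$; the ``$-1$'' in the statement of the theorem is exactly what makes the exponent $1-2^{1-x}$ positive. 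Two smaller points: the per-simplex failure probability coming from Hoffman--Kahle--Paquette is only polynomially small (of order $N'^{-\delta}$), not super-polynomially small, so you must choose $\delta$ growing with $\ell$ (the paper takes $\delta=3\ell$) to beat the $n^{\ell+1}$ union bound; and you should also record that the links are nonempty and connected (and that $Y$ itself is connected) before invoking Garland.
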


The proof of Theorem \ref{thmtrivialhomology}  given below uses Garland's method as described in  \cite{ballmann}. 

Given a graph $G$ we denote by $\mathcal{L} = \mathcal{L}(G)$ the normalised Laplacian of $G$. All eigenvalues of $\mathcal{L}$ lie in $[0,2]$ and the multiplicity of the eigenvalue $0$ equals the number of connected components of $G$. 
Let $\kappa(G) > 0$ denote the smallest non-zero eigenvalue of $\mathcal{L}$; the quantity $\kappa(G)$ is known as 
 the \textit{spectral gap} of $G$. 

Given a simplicial complex $X$ and a simplex $\sigma\in X$, 
let $L_\sigma$ denote the $1$-skeleton of the link $\lk_X(\sigma)$ and let $\kappa_\sigma = \kappa(L_\sigma)$ denote the spectral gap of the graph $L_\sigma$.

The following result is well-known, see \cite{ballmann}:
\begin{theorem}\label{thmballmann}
Let $\ell\geq 0$ be a non-negative integer. If $X$ is a finite $(\ell+2)$-dimensional simplicial complex such that for every $\ell$-dimensional simplex $\sigma\in X$ the link $L_\sigma$ 
 is a non-empty connected graph with spectral gap satisfying $$\kappa\left(L_\sigma\right)>1-\frac {1}{\ell+2},$$ then $$H^{\ell +1}(X;\mathbb Q)=0.$$
\end{theorem}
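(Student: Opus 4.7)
The plan is to apply Garland's local-to-global spectral method to show that harmonic $(\ell+1)$-cochains must vanish.

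First I would reduce to a harmonic statement via combinatorial Hodge theory. Equip the real cochain spaces $C^k(X;\mathbb{R})$ with natural weighted inner products (a standard choice assigns each $k$-simplex a weight proportional to the number of top-dimensional simplices containing it) so that the combinatorial Laplacian $\Delta_k = d^*d + dd^*$ is self-adjoint and positive semidefinite. Since extension of scalars preserves Betti numbers, it suffices to prove $H^{\ell+1}(X;\mathbb{R}) = 0$, and by Hodge decomposition $H^{\ell+1}(X;\mathbb{R}) \cong \ker \Delta_{\ell+1}$. Thus the goal is to show that any harmonic cochain $\phi \in C^{\ell+1}(X;\mathbb{R})$, i.e., one satisfying $d\phi = 0$ and $d^*\phi = 0$ simultaneously, must vanish.

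Second, I would localize $\phi$ to the links. For each $\sigma \in X_\ell$, since $\dim X = \ell+2$ the link $\lk_X(\sigma)$ is at most one-dimensional, so $L_\sigma = \lk_X(\sigma)$ is itself a graph. Define the localized $0$-cochain $\phi_\sigma$ on the vertex set of $L_\sigma$ by $\phi_\sigma(v) = \phi(\sigma \ast v)$, with a consistent orientation convention. The normalized graph Laplacian $\mathcal{L}(L_\sigma)$ has $\ker \mathcal{L}(L_\sigma) = \mathbb{R}\cdot \mathbf{1}$, and the spectral-gap hypothesis gives
\[
\langle \mathcal{L}(L_\sigma) \phi_\sigma, \phi_\sigma\rangle_{L_\sigma} \, \ge\, \Bigl(1-\tfrac{1}{\ell+2}\Bigr)\|\phi_\sigma - \overline{\phi_\sigma}\|^2_{L_\sigma},
\]
where $\overline{\phi_\sigma}$ is the orthogonal projection of $\phi_\sigma$ onto the constants.

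Third, I would establish the core Garland identity: a local-to-global formula of the schematic form
\[
\sum_{\sigma \in X_\ell}\langle \mathcal{L}(L_\sigma)\phi_\sigma, \phi_\sigma\rangle_{L_\sigma} \, =\, (\ell+2)\bigl(\|d\phi\|^2 + \|d^*\phi\|^2\bigr) \, - \, \|\phi\|^2,
\]
up to precise weighting conventions. The loss $-\|\phi\|^2$ on the right accounts for the contributions of the local kernels (the constants on each $L_\sigma$). Substituting the harmonicity of $\phi$ into this identity and combining with the spectral-gap bound of Step 2 yields
\[
\Bigl(1-\tfrac{1}{\ell+2}\Bigr)\sum_{\sigma \in X_\ell}\|\phi_\sigma - \overline{\phi_\sigma}\|^2_{L_\sigma} \, \le\, -\|\phi\|^2.
\]
The left-hand side is non-negative while the right-hand side is non-positive, forcing $\phi = 0$ (one also argues that the constants $\overline{\phi_\sigma}$ themselves vanish, using $d^*\phi = 0$ to identify $\sum_\sigma \overline{\phi_\sigma}$ with a multiple of a coboundary-adjoint expression in $\phi$).

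The main obstacle is the precise derivation of the Garland identity with the exact constant $1/(\ell+2)$ matching the threshold. This requires careful bookkeeping: each $(\ell+1)$-simplex $\tau \in X$ arises as $\sigma \ast v$ from exactly $\ell+2$ pairs $(\sigma, v)$ with $\sigma \in X_\ell$ and $v$ a vertex of $L_\sigma$, and one must track how the weighted inner products on $X$ and on the links $L_\sigma$ interact under this multiple counting, as well as how the off-diagonal terms assemble into $\|d\phi\|^2 + \|d^*\phi\|^2$. The constant $1/(\ell+2)$ emerges precisely from balancing the $\ell+2$-fold multiplicity against the constant contribution subtracted from each $\phi_\sigma$.
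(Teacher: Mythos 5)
First, note that the paper does not prove this statement at all: it is quoted as a known result with a citation to Ballmann--\'Swi\c{a}tkowski, so there is no in-paper argument to compare against. Your overall strategy --- Hodge-theoretic reduction to harmonic cochains, localization $\phi_\sigma(v)=\phi(\sigma\ast v)$ to the graph links of $\ell$-simplices, a Garland-type local-to-global identity, and the spectral gap bound --- is exactly the method of the cited source, and steps one and two are essentially fine (modulo the small point that for the normalized Laplacian the kernel is spanned by $D^{1/2}\mathbf 1$ rather than $\mathbf 1$, and that one should worry about simplices of positive weight, i.e.\ purity of $X$).

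However, your central identity is wrong as stated, and in a way that breaks the logic rather than just the bookkeeping. If
$\sum_{\sigma}\langle \mathcal{L}(L_\sigma)\phi_\sigma,\phi_\sigma\rangle = (\ell+2)(\|d\phi\|^2+\|d^*\phi\|^2)-\|\phi\|^2$
held, then for harmonic $\phi$ the left side would be nonnegative and the right side would equal $-\|\phi\|^2\le 0$, forcing $\phi=0$ with no use of the spectral gap whatsoever; the hypothesis $\kappa(L_\sigma)>1-\frac{1}{\ell+2}$ would be redundant and every finite $(\ell+2)$-complex with non-empty connected links would be $H^{\ell+1}$-acyclic. This is false: for $\ell=0$ a triangulated surface of genus $g\ge 1$ has connected circle links at every vertex but $H^1\ne 0$ (consistently, long cycles have small spectral gap). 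The correct Garland identity carries the norm term with a \emph{positive} coefficient on the side of the link energies, schematically
\begin{equation*}
\sum_{\sigma\in X_\ell}\|d_\sigma\phi_\sigma\|^2_{L_\sigma} \;=\; \|d\phi\|^2 \,+\, (\ell+1)\,\|\phi\|^2,
\end{equation*}
which is paired with the counting identity $\sum_{\sigma}\|\phi_\sigma\|^2_{L_\sigma}=(\ell+2)\|\phi\|^2$ and with the observation that $d^*\phi=0$ makes each $\phi_\sigma$ orthogonal to the constants in the degree-weighted inner product (its weighted mean is a multiple of $(d^*\phi)(\sigma)$). Then for harmonic $\phi\ne 0$ the spectral gap gives $(\ell+1)\|\phi\|^2=\sum_\sigma\langle\mathcal{L}_\sigma\phi_\sigma,\phi_\sigma\rangle>\frac{\ell+1}{\ell+2}\sum_\sigma\|\phi_\sigma\|^2=(\ell+1)\|\phi\|^2$, a contradiction; the threshold $1-\frac{1}{\ell+2}=\frac{\ell+1}{\ell+2}$ is precisely the ratio of the two counting constants. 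You correctly identified the derivation of the identity as the crux, but the version you wrote down cannot be the right one, so as it stands the proof has a genuine gap at its key step.
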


Recall that by Corollary \ref{corconn} and Proposition \ref{propsimplyconn} the lower random complex $Y$ in the medial regime is connected and simply connected. Thus, Theorem \ref{thmtrivialhomology}  follows once we have established:

\begin{lemma}\label{lemspectral}
Let $Y\in \Omega_n^\ast$ be a homogeneous random simplicial complex with respect to the lower probability measure in the medial regime, see  (\ref{medial}). 
Then $Y$ has the following property with probability tending to $1$ as $n\to \infty$: for every $\ell$-dimensional simplex $\sigma\subset Y$, where 
\begin{eqnarray}\label{ell}
0\leq\ell \, \le \,  \log_2\log_{(p^{-1})} n -2-\delta_0
\end{eqnarray}
 the link $L_\sigma$ is non-empty, connected and its spectral gap satisfies
 $\kappa_\sigma>1-\frac{1}{\ell +2}.$
\end{lemma}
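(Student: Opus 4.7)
My plan is to identify the $1$-skeleton $L_\sigma$ of the link of any low-dimensional simplex $\sigma$ in $Y$ as an Erd\H os--R\'enyi random graph on a random vertex set (via Lemma \ref{linkofkvert}), to apply a concentration inequality for the spectral gap of such graphs, and to conclude via a union bound over all $\sigma$ with $\dim\sigma\le L:=\log_2\log_{(p^{-1})}n-2-\delta_0$.

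For a fixed $\ell$-simplex $\sigma$ with $\ell\le L$, Lemma \ref{linkofkvert} shows that, conditional on $\sigma\subseteq Y$, the link $\lk_Y(\sigma)$ is a lower random simplicial complex on $[n]\setminus V(\sigma)$ with parameters $p'_\tau=p_\tau\prod_{v\in V(\sigma)}p_{v\tau}$. Homogeneity then yields vertex-inclusion probability $\pi_0=p_0 p_1^{\ell+1}$ and, conditional on the resulting random vertex set $W$, edge-inclusion probability $\pi_1=p_1 p_2^{\ell+1}$; both are bounded below by $p^{\ell+2}$. Thus $L_\sigma$ given $W$ is an Erd\H os--R\'enyi graph $G(|W|,\pi_1)$. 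A Chernoff bound gives $N:=|W|\ge \tfrac{1}{3} n\pi_0$ with overwhelming probability, hence $Nq\ge \tfrac{1}{3} n\,p^{2\ell+3}$ where $q:=\pi_1$. Under the dimensional hypothesis $\ell\le L$, one has $p^{2\ell+3}\ge (\ln n)^{-\gamma}$ for some constant $\gamma=\gamma(a)>0$, so $Nq$ is polynomial in $n$.

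Spectral concentration for $G(N,q)$ (via matrix Bernstein applied to the centred adjacency matrix, combined with concentration of vertex degrees) gives: for every $C>0$, with probability at least $1-2N^{1-C^2/2}$, all non-zero eigenvalues of the normalized Laplacian lie within $C\sqrt{\log N/(Nq)}$ of $1$. Choosing $C=\sqrt{2\log_2\ln n}$ drives the concentration radius to $o(1/(\ell+2))$, since $(\ell+2)^2\log N\le (\log_2\ln n)^2\ln n=o(Nq)$, while making the per-simplex failure probability at most $\exp(-\Omega(\ln n\cdot\log_2\ln n))$. The number of simplexes of $\Delta_n$ of dimension at most $L$ is bounded by $n^{L+2}=\exp(O(\ln n\cdot\log_2\ln n))$, so the union bound closes with room to spare; positivity of the spectral gap together with $|W|\to\infty$ also yields connectedness and non-emptiness of $L_\sigma$.

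The main obstacle is arranging the spectral concentration inequality to have tail decay fast enough to survive the super-polynomial union bound over roughly $n^{\log_2\ln n}$ simplexes, while simultaneously keeping the concentration radius below the threshold $1/(\ell+2)$ required by Theorem \ref{thmballmann}. The dependence of the concentration bound on the edge density $q$ (and hence on $\ell$) is precisely what forces the homogeneity assumption flagged in the authors' remark preceding the lemma; once a sharp enough Erd\H os--R\'enyi tail bound is in hand, the remaining inequalities are routine verifications from the dimensional hypothesis on $\ell$.
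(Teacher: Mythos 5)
Your high-level strategy is the same as the paper's: realise the $1$-skeleton $L_\sigma$ of the link as an Erd\H os--R\'enyi graph on a concentrated random vertex set, invoke a spectral-gap concentration theorem, and union-bound over simplexes before feeding the result into Theorem \ref{thmballmann}. However, there is a genuine gap at the very first quantitative step, and it is the step on which the whole lemma turns. You take the vertex- and edge-inclusion probabilities of $L_\sigma$ to be $\pi_0=p_0p_1^{\ell+1}$ and $\pi_1=p_1p_2^{\ell+1}$, bounded below by $p^{\ell+2}$. This is the formula of Lemma \ref{linkofkvert}, which describes the link of a vertex \emph{set} $V$ (a simplex $\tau$ lies in $\lk_Y(V)$ as soon as each single cone $v\tau$, $v\in V$, lies in $Y$). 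Garland's method requires the link of the \emph{simplex} $\sigma$, where $\tau\in\lk_Y(\sigma)$ demands the full join $\tau\ast\sigma\in Y$, hence in the lower model the presence of every face $v\tau'$ with $\emptyset\neq\tau'\subseteq\sigma$. The correct parameters are therefore products of $2^{\ell+1}$ of the original parameters, $p^{2^{\ell+1}}\le p'_v,\,p'_e\le P^{2^{\ell+1}}$, decaying doubly exponentially in $\ell$ rather than singly exponentially.

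This is not a cosmetic discrepancy: writing $\ell+1=\log_2\log_{(p^{-1})}n-x$, the correct densities give $N\asymp n^{1-2^{-x}}$ vertices and edge probability $q\asymp n^{-2^{-x}}$, so $Nq\asymp n^{1-2^{1-x}}$, and the requirement $Nq\gg\log N$ needed for any spectral-gap theorem holds precisely when $x>1$ plus a margin --- this is where the hypothesis $\ell\le\log_2\log_{(p^{-1})}n-2-\delta_0$ comes from. Your computation instead yields $Nq\gtrsim n(\ln n)^{-\gamma}$, which is nearly linear in $n$ for \emph{every} $\ell$ up to order $\ln n/\ln\ln n$; it would ``prove'' vanishing of homology far beyond the stated range and renders the lemma's threshold unexplained. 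The failure probabilities and the union bound also have to be recalibrated to the true scale $N\approx n^{1-2^{-x}}$ (the paper applies the Hoffman--Kahle--Paquette theorem with $\delta=3\ell$ to get per-simplex failure $\lesssim n^{-3\ell/2}$ against $n^{\ell+1}$ simplexes). Secondarily, your spectral step is only sketched: a matrix-Bernstein bound on the adjacency matrix does not by itself control the normalised Laplacian or give connectedness (positivity of the second eigenvalue presupposes knowing the multiplicity of $0$); you would need to either cite a theorem of Hoffman--Kahle--Paquette type, as the paper does, or supply the degree-concentration argument in full.
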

%

\begin{proof} Fix a simplex $\sigma\subset \Delta_n$ of dimension $\ell$ and let $\Delta'_\sigma\subset \Delta_n$ denote the simplex spanned by those vertexes of $[n]$ which are not in $\sigma$; clearly $\dim \Delta'_\sigma= n-\ell-1$. 
Consider a random simplicial complex  $Y\in \Omega_n^\ast$ containing $\sigma$. The 1-skeleton $L_\sigma$ of the link $\lk_Y(\sigma)$ 
 is a random subgraph of $\Delta'_\sigma$ and according to Theorem 6.2 from \cite{farber} the graph $L_\sigma$ is a random graph with respect to the lower probability measure 
with vertex and edge probability parameters given by the formulae
\begin{eqnarray}\label{eqnprobpara}
p'_v = p_v\cdot \prod_{\tau\subseteq \sigma} p_{v\tau}\quad\text{and}\quad p'_{e} = p_{e}\cdot \prod_{\tau\subseteq \sigma} p_{e\tau}.
\end{eqnarray}

Since  $p\leq p_\tau\leq P$ for every simplex $\tau$ we obtain the following bounds on the probability parameters $p'_v$ and $p'_e$ of the graph $L_\sigma$
\begin{align}\label{eqnprobboundslink}
p^{2^{\ell + 1}} \leq p'_v\leq P^{2^{\ell+1}} \quad \mbox{and} \quad p^{2^{\ell + 1}} \leq p'_e\leq P^{2^{\ell+1}}.
\end{align}

Since by assumption $Y$ is homogeneous it follows that the link $L_\sigma$ is homogeneous as well, i.e. $p'_v=p'_{v'}$ and $p'_e=p'_{e'}$
for any vertices $v, v'$ and edges $e, e'$ of $L_\sigma$.  


The function $f_0^\sigma$ counting the number of vertices of $L_\sigma$, 
$Y\mapsto f_0(L_\sigma)$, is a random variable and its expectation $\E(f_0^\sigma)$ satisfies
$$(n-\ell) p^{2^{\ell+1}}\le \E(f_0^\sigma) \le (n-\ell) P^{2^{\ell+1}}.$$

From now on we shall assume (because of (\ref{ell})) that 
$$\ell+1 \le  \log_2\log_{(p^{-1})}n -1 - \delta_0$$
where $\delta_0>0$ is a constant. We can write 
$$\ell+1 =  \log_2\log_{(p^{-1})}n - x$$
where $x=x(\ell)\, \ge 1+\delta_0$. Then 
$$p^{2^{\ell+1}} = n^{-2^{-x}}\quad \mbox{and}\quad P^{2^{\ell+1}} = n^{-\lambda 2^{-x}}$$
where $\lambda=\log_p P$ is a constant,  $0<\lambda<1.$ Thus we see that
\begin{eqnarray}\label{greater}\E(f_0^\sigma) \ge (n-\ell)n^{-2^{-x}}\ge \frac{1}{2} n^{1-2^{-x}}
\end{eqnarray}
and similarly, 
\begin{eqnarray}\label{smaller}
\E(f_0^\sigma) \le n^{1-\lambda 2^{-x}}.\end{eqnarray}

Since $f_0^\sigma$ is a binomial random variable we may apply Chernoff's inequality (see Corollary 2.3 in \cite{JLR}) which states that for any $0<\epsilon<3/2$ the probability that $f_0^\sigma$ deviates from its expectation $\E(f_0^\sigma)$ by more than 
$\epsilon \E(f_0^\sigma)$ is at most $ 2 \exp(-\frac{\epsilon^2}{3}\E(f_0^\sigma ))$. 
Thus, probability that $f_0^\sigma$ is smaller than $\frac{1}{4}n^{1-2^{-x}}$ is bounded above by 
$$2\cdot \exp\left(-\frac{n^{1-2^{-x}}}{24} \right)\le 
2\cdot \exp\left(-{n^{1/2}}\right).$$ 
Similarly, the probability that $f_0^\sigma$ is larger than $2n^{1-\lambda 2^{-x}}$ is smaller than $2\cdot \exp(-{n^{1/2}})$. Hence we see that 
the probability that for some $\ell$ satisfying
$$\ell+1\le \log_2\log_{(p^{-1})}n - 1 -\delta_0$$ the inequality 
\begin{eqnarray}\label{range}
\frac{1}{4} n^{1-2^{-x}}\, \le\,  f_0^\sigma \, \le\,  2n^{1-\lambda 2^{-x}}
\end{eqnarray}
is violated is smaller than
$$4e^{-n^{1/2}}\cdot (n+1)^{\log_2\log_{(p^{-1})}n},$$
it is easy to see that this quantity tends to zero as $n\to \infty$. 
Thus, asymptotically almost surely, the graph $L_\sigma$ is an Erd\H{o}s-R\'{e}nyi  random graph on the number of vertices $N=f_0^\sigma$ satisfying (\ref{range}). The edge probability $\rho$ of $L_\sigma$ satisfies the inequalities
$$p^{2^{\ell+1}}\le \rho\le P^{2^{\ell+1}}.$$

We shall use the following result about the spectral gap of the Erd\H{o}s-R\'{e}nyi random graphs which is a corollary of Theorem 1.1 from \cite{hoffman}. Consider a random Erd\H{o}s-R\'{e}nyi graph $G\in G(N, \rho)$ such that 
\begin{eqnarray}\label{rho}
\rho\, \ge\,  \frac{(1+\delta)\log N}{N},
\end{eqnarray}
for some fixed $\delta>0$. Then for any $c\ge 1$ there exists an integer $N_{c, \delta}$ such that for any $N>N_{c, \delta}$ the graph $G$ is connected and 
\begin{eqnarray}\label{kap}
\kappa(G) >1-\frac{1}{c}\end{eqnarray}
with probability at least $1-N^{-\delta}$. 

We shall apply this statement with $c=\ell+2$ and $\delta=3\ell$. 
Using (\ref{range}) 
we obtain
$$
\frac{\rho N}{\log N} \ge 
\frac{p^{2^{\ell+1}} f_0^\sigma}{\log f_0^\sigma} \ge \frac{1}{2} \frac{n^{-2^{-x} } n^{1-2^{-x}}} {(1-\lambda 2^{-x} ) \log n +1}\ge
\frac{1}{4} \frac{n^{1-2^{1-x} }} { \log n}
\ge \frac{1}{4} \frac{n^{1-2^{-\delta_0} }} { \log n}\quad \ge 1+\delta=3\ell+1.
$$
Hence we see that for any $n\ge M_0$ (where $M_0$ is an integer depending only on the value of $\delta_0$) the inequality (\ref{rho}) will be violated for a given simplex $\sigma$ with probability at most 
$$
n \cdot N^{-3\ell} \le n \cdot \left(\frac{1}{4} n^{1-2^{-x}} \right)^{-3\ell}\le n^{-\frac{3\ell}{2}},
$$
provided $N\ge \frac{1}{4} n^{1-2^{-x}}$. 
Here the factor $n$ 
takes into account the fact that we are applying inequality (\ref{rho}) a number of times, for each possible value of $N$, and the range of values of $N$ is bounded above by $2n^{1-\lambda 2^{-x}}\le n$
according to (\ref{range}).

Therefore the expected number of simplexes $\sigma$ with $\dim \sigma \, \le\,  \log_2\log_{(p^{-1})}n-2 -\delta_0$, for which (\ref{rho}) is violated is bounded above by
$$n^{\ell+1} \cdot \left(4e^{-n^{1/2}}\cdot (n+1)^{\log_2\log_{(p^{-1})}n}+ n^{- \frac{3\ell}{2} } \right)$$

 and this quantity obviously tends to zero. Thus, with probability tending to 1, the spectral gap inequality (\ref{kap}) will be satisfied for all simplexes $\sigma$ in the indicated range of dimensions. This completes the  proof of Lemma \ref{lemspectral}. 
\end{proof}

\section{Proof of Theorem \ref{thm2}}\label{prfthm2}

The probability that no $(n-1)$-dimensional simplexes is included into $Y$ is 
$$\prod_{\dim \sigma = n-1}(1-p_\sigma) \le (1-p)^{n+1}$$
which converges to $0$ since $0<p<1$ is a constant. This proves statement (1). 

The proofs of statements (2) and (3) are based on Theorem \ref{thm1} and the duality relation given by Theorem \ref{dual}. Indeed, let $Y$ be a random simplicial complex with respect to the upper model in the medial regime, i.e. we assume that the probability parameters $p_\sigma$ satisfy
$$0<e^{-a} =p \le p_\sigma\le P=e^{-A}<1.$$
Consider the dual system of probability parameters $p'_\sigma =1- p_{\check\sigma}$ (see (\ref{dualsys})) which satisfies 
$$0<e^{-A'}=1-P \le p'_\sigma\le 1-p=e^{-a'}<1, $$
where $a'$ and $A'$ are defined in (\ref{prime}).
Next, we use the isomorphism $\cc$ of Theorem \ref{dual} and the duality for the Betti numbers (\ref{betti}). The complex $\cc(Y)$ is a random simplicial complex in the lower model with respect to the system of probability parameters $p'_\sigma$. Hence by Theorem \ref{thm1}, the dimension of the complex $\cc(Y)$ satisfies 
\begin{eqnarray}\label{dimc}
\lfloor \beta(n,A')\rfloor -1 \le \dim \cc(Y) \le \beta(n, a')-1+\epsilon_0,
\end{eqnarray}
a.a.s. where $\epsilon_0>0$ is an arbitrary constant. Since the maximal dimension $d$ such that $\cc(Y)$ contains the skeleton 
$\Delta_n^{(d)}$ equals $n-2-\dim Y$, the inequality (\ref{dimc}) implies statement (2) of Theorem \ref{thm2}. 

To prove the third statement we observe that the reduced Betti numbers of $\cc(Y)$ vanish in all dimensions except possibly
$$\log_2\ln n  -\log_2A' -1 -\delta_0 <j \le   \log_2\ln n +   \log_2\log_2\ln n -\log_2a' -1 +\epsilon_0,$$
Since $b_j(Y) = b_{n-2-j}(\cc(Y))$ (cf. (\ref{betti})), we obtain that the Betti numbers $b_j(Y)$ vanish except possibly for 
$$\log_2\ln n  -\log_2A' +1 -\delta_0 \, <\, n-j\, \le   \log_2\ln n +   \log_2\log_2\ln n -\log_2a' +1 +\epsilon_0.$$
This completes the proof.

\end{document}